\newtheorem{prop}{\bf Proposition}[section]
\newtheorem{thm}[prop]{\bf Theorem}
\newtheorem{lem}[prop]{\bf Lemma}
\theoremstyle{definition}
\newtheorem{defi}[prop]{Definition}
\newtheorem{rmk}[prop]{Remark}
\newtheorem{condi}[prop]{Condition}
\title{Topological Entropy of Random Walks on Mapping Class Groups}
\author{Hidetoshi Masai}
\address{
Graduate School of Mathematical Sciences, 
The University of Tokyo,
3-8-1 Komaba Meguro-ku Tokyo 153-8914, Japan}
\email{masai at ms.u-tokyo.ac.jp}
\date{}
\subjclass[2010]{Primary~57M60. Secondary~37B40.}
\begin{document}

\begin{abstract}
For any pseudo-Anosov diffeomorphism on a closed orientable surface $S$ of genus 
greater than one, it is known by the work of Bers and Thurston that the topological entropy agrees with the translation distance on the Teichm\"uller space with respect to the Teichm\"uller metric.
In this paper, we consider random walks on the mapping class group of $S$.
The drift of a random walk is defined as the translation distance of the random walk.
We define the topological entropy of a random walk and prove that it almost surely agrees with the drift on the Teichm\"uller space with respect to the Teichm\"uller metric.
\end{abstract}
\maketitle
\thispagestyle{empty}
\section{Introduction}
Let $S$ be a closed orientable surface of genus $\geq 2$. 
According to the Nielsen-Thurston classification \cite{Thu}, every non-periodic irreducible automorphism of $S$ is isotopic to a pseudo-Anosov diffeomorphism.
Thurston  proved that the topological entropy of any pseudo-Anosov diffeomorphism $\varphi$ coincides with $\log\lambda_{\varphi}$
where $\lambda_{\varphi}$ is the dilatation of $\varphi$ (c.f. \cite[Expos\'e 10]{FLP}).
Also by the work of Bers \cite{Ber}, $\log\lambda_{\varphi}$ is known to be equal to the translation distance of $\varphi$ on the Teichm\"uller space with respect to the Teichm\"uller metric.
The purpose of this paper is to demonstrate a ``random version'' of the work of Bers and Thurston.
Let $\mathrm{MCG}(S)$ denote the mapping class group of $S$.
We consider the random walk on $\mathrm{MCG}(S)$ which is determined by a probability measure $\mu$ on $\mathrm{MCG}(S)$.
This $\mu$ induces a probability measure $\mathbb{P}$ on $\mathrm{MCG}(S)^{\mathbb{Z}}$.
Throughout the paper we assume that $\mu$ has finite first moment with respect to the Teichm\"uller metric and the support of $\mu$ generates a non-elementary subgroup of $\mathrm{MCG}(S)$ (see Condition \ref{cond}).
Before stating the main theorem, we prepare several terminologies briefly.
Formal definitions are given in \S\ref{sec.pre}.
First, the topological entropy $h(\omega)$ of a sample path $\omega=(\omega_{n})\in\mathrm{MCG}(S)^{\mathbb{Z}}$ 
is defined using open coverings of $S$, similarly to the one for surface diffeomorphisms.
This measures growth rate of the number of distinguishable orbits of the random walk.
%
Next, Karlsson \cite{Kar} proved that for $\mathbb{P}$-a.e $\omega = (\omega_{n})$, the exponential growth rate of the length of the image $\omega_{n}(\alpha)$ of any simple closed curve $\alpha$ with respect to any metric always gives the same quantity, which is called the ``Lyapunov exponent'' $\lambda(\omega)$ of $\omega$.
Moreover, it is also proved that $\log\lambda(\omega)$ almost surely coincides with the drift $L_{a}(\omega)$ with respect to Thurston's asymmetric Lipschitz metric on the Teichm\"uller space $\mathcal{T}(S)$ of $S$.
Roughly speaking, the drift is the translation distance of $\omega$ .
The goal of this paper is to show that those quantities are the same almost surely.
\begin{thm}\label{thm.main}
Let $\mu$ be a probability measure on $\mathrm{MCG}(S)$ which satisfies Condition \ref{cond} and 
$\mathbb{P}$ the probability measure on $\mathrm{MCG}(S)^{\mathbb{Z}}$ induced by $\mu$.
For $\mathbb{P}$-a.e. $\omega\in\mathrm{MCG}(S)^{\mathbb{Z}}$,
we have the following equality.
$$L_{a}(\omega) = \log\lambda(\omega) = h(\omega) = L_{\mathcal{T}}(\omega),$$
where $L_{{\mathcal{T}}}(\omega)$ is the drift with respect to the Teichm\"uller metric.
These quantities are independent of $\omega$ and they are invariants of the random walk.
\end{thm}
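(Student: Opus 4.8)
The equality $L_{a}(\omega)=\log\lambda(\omega)$ is precisely Karlsson's result quoted above and may be taken as given, so the plan is to adjoin the two remaining quantities by proving $h(\omega)=\log\lambda(\omega)$ and $L_{\mathcal{T}}(\omega)=\log\lambda(\omega)$. The first is a random-walk analogue of Thurston's entropy formula and the second of Bers' identification of the Teichm\"uller translation length. Since all four numbers are to coincide, it is enough to pin $h(\omega)$ and $L_{\mathcal{T}}(\omega)$ to the common value $\log\lambda(\omega)$ by two sandwich estimates; the subadditive ergodic theorem, together with ergodicity of the shift on $\mathrm{MCG}(S)^{\mathbb{Z}}$, will then make each limit $\mathbb{P}$-almost surely constant and hence a genuine invariant of the random walk.

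For the topological entropy I would fix a basepoint $o\in\mathcal{T}(S)$, a simple closed curve $\alpha$, a triangulation $\tau$ of $S$, and an open cover $\mathcal{U}$ subordinate to $\tau$; write $N(\mathcal{U},\omega,n)$ for the least cardinality of a subcover of the refinement $\bigvee_{i=0}^{n-1}\omega_{i}^{-1}\mathcal{U}$. The key observation is that $N(\mathcal{U},\omega,n)$ is comparable, up to a multiplicative constant depending only on $\mathcal{U}$, to the number of times the boundaries of the pulled-back cover elements cross $\tau$, a quantity whose exponential growth rate coincides with that of the hyperbolic length $\ell_{o}(\omega_{n}(\alpha))$, namely $\log\lambda(\omega)$. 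Taking logarithms and dividing by $n$ removes the constant, so that for this cover
\[
\lim_{n\to\infty}\frac1n\log N(\mathcal{U},\omega,n)=\log\lambda(\omega),
\]
the right-hand side being the definition of the Lyapunov exponent. As $h(\omega)$ is the supremum over all open covers, this already yields $h(\omega)\ge\log\lambda(\omega)$; the delicate half is the matching upper bound, valid for \emph{every} cover. In the deterministic pseudo-Anosov case the upper bound comes from a single invariant flat structure with respect to which the map is affine with factor $\lambda_{\varphi}$, but along a random sample path no such invariant structure exists, and I expect this to be the main obstacle. I would resolve it using that, by non-elementarity and finiteness of the first moment, the walk almost surely tracks a Teichm\"uller geodesic ray sublinearly and spends a definite proportion of time in the thick part, so that the expansion of cover boundaries occurs at a uniform scale and the crossing numbers grow no faster than $\ell_{o}(\omega_{n}(\alpha))$.

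For the Teichm\"uller drift I would use Kerckhoff's formula $d_{\mathcal{T}}(o,\omega_{n}(o))=\tfrac12\log\sup_{\gamma}\mathrm{Ext}_{o}(\gamma)/\mathrm{Ext}_{\omega_{n}(o)}(\gamma)$ together with the comparison between extremal and hyperbolic length. Testing the supremum on the single curve $\gamma=\omega_{n}(\alpha)$, for which $\mathrm{Ext}_{\omega_{n}(o)}(\gamma)=\mathrm{Ext}_{o}(\alpha)$ is constant, and using $\mathrm{Ext}_{o}(\beta)\ge c\,\ell_{o}(\beta)^{2}$ gives at once the lower bound $L_{\mathcal{T}}(\omega)\ge\log\lambda(\omega)$. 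The reverse inequality again requires controlling the full supremum over $\gamma$, for which the thick-part statistics of the sample path show that the extremal-length stretch grows at the same exponential rate as the hyperbolic-length stretch, the thin-part excursions contributing only subexponentially; this yields $L_{\mathcal{T}}(\omega)\le\log\lambda(\omega)$. Combining the two sandwiches with Karlsson's equality $L_{a}(\omega)=\log\lambda(\omega)$ identifies all four quantities, and their almost-sure constancy is then immediate from ergodicity of the shift.
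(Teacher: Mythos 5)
The decisive gap is in the upper bound $h(\omega)\leq\log\lambda(\omega)$, which is the actual content of this paper, and which your proposal compresses into the single sentence that sublinear tracking and thick-part statistics make ``the expansion of cover boundaries occur at a uniform scale.'' No mechanism is offered for converting that geometric heuristic into a count of elements of $\bigvee_i w_i(\mathcal{A})$ for an \emph{arbitrary} open cover $\mathcal{A}$, and in the deterministic case already this step requires a Markov partition, not just control of lengths. The paper's proof of this half occupies \S\ref{sec.Markov}--\ref{sec.main}: a semi-Markov partition is built from the Kaimanovich--Masur limit foliations $F_{\pm}(\omega)$ (Theorem \ref{thm.KM}), with basepoints $X_n$ selected at ``good'' times of positive density via the ergodic theorem together with Tiozzo's sublinear tracking (Theorem \ref{thm.Tio}, Lemma \ref{lem.Tio}); this yields a random subshift of finite type coding the dynamics (Theorem \ref{thm.symbolic}) whose cylinder counts grow at rate at most $L$ (Lemma \ref{lem.p-entropy}); and then, to pass from cylinder counts to every open cover, one must control the Lebesgue number of $\mathcal{A}$ under the changing flat structures (Lemma \ref{lem.Lebesgue}), bound the vertical lengths of birectangles uniformly by shrinking $U_{\pm}$ (Lemma \ref{lem.U}, which itself needs positivity of $\nu$-measure of shadow neighborhoods, Proposition \ref{prop.positive-nu}), and show the refinement scale $c(n)$ grows sublinearly (Lemma \ref{lem.c(n)}). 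A further obstruction your proposal does not engage with at all: $h(\omega)$ is defined through diffeomorphism representatives $\bold{w}=(w_n)$ of the mapping classes (an isotopy class does not act on an open cover), and choosing representatives destroys equivariance with the Bernoulli shift, so the ergodic theorems underlying your ``thick-part statistics'' cannot be applied directly to the objects you count; the paper handles this by the specific choice $w_n=f^{-1}\circ\phi^{-1}\circ f$ in \S\ref{sec.Lebesgue} and by routing all ergodic input through $\omega$ rather than $\bold{w}$.

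Two secondary points. First, your ``key observation'' that $N(\mathcal{U},\omega,n)$ is comparable to crossing numbers \emph{up to a multiplicative constant} is stronger than what can be true at this stage: it would deliver the hard upper bound for that cover for free. Only the lower-bound direction is accessible cheaply, and the paper obtains it (Lemma \ref{lem.lemma1}, via the analogue of \cite[Lemma 10.8]{FLP} on hyperbolic distances of lifts in $\mathbb{H}^2$, Lemma \ref{lem.10.8}), so your $h(\omega)\geq\log\lambda(\omega)$ is correct in spirit though your justification overclaims; note also the paper's convention uses $w_i(\mathcal{A})$ rather than preimages. Second, your Kerckhoff/extremal-length argument for $L_{\mathcal{T}}(\omega)=\log\lambda(\omega)$ is redundant: the paper gets $L_a=L_{\mathcal{T}}$ immediately from Choi--Rafi (Theorem \ref{thm.CR}) together with basepoint-independence of the drift, and Karlsson's theorem as quoted (Theorem \ref{thm.Kar}) already asserts $\log\lambda=L$; your sketch of the reverse inequality $L_{\mathcal{T}}\leq\log\lambda$ is, moreover, the same kind of unproved thick-part hand-wave, so citing Karlsson is both sufficient and safer. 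The genuinely missing idea, then, is the entire symbolic-dynamics apparatus for $h(\omega)\leq L$; without it the proposal establishes only three of the four equalities.
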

The strategy of the proof is similar to the 
one for pseudo-Anosov diffeomorphisms in \cite[Expos\'e 9-10]{FLP}.
Indeed, $\log\lambda(\omega)\leq h(\omega)$ can be proved almost in the same way as the case of pseudo-Anosov diffeomorphisms.
To prove the opposite inequality for pseudo-Anosovs, in \cite{FLP}, a subshift of finite type is associated to the dynamics of a pseudo-Anosov iteration by constructing so called a Markov partition.
We will define a random subshift of finite type as a ``random version'' of a subshift of finite type (see \S\ref{sec.random subshift}).
Then 
 we will construct a semi-Markov partition of $S$ which respects the dynamics of $\omega$ (see Definition \ref{defi.semi-Markov}) and associate to it a random subshift of finite type.
The main difficulty, unlike pseudo-Anosov diffeomorphisms, is that for $\mathbb{P}$-a.e $\omega=(\omega_n)\in\mathrm{MCG}(S)^{\mathbb{Z}}$,
some part of $\omega$ can be arbitrarily ``bad''.
For example, the orbit $\{\omega_{n}X\}$ of any point $X\in\mathcal{T}(S)$ may have large backtrack.
On the other hand, for a pseudo-Anosov $\varphi$ case, the fact that $\varphi$ acts as a translation on a Teichm\"uller geodesic is implicitly used in \cite{FLP}.
To overcome the difficulty, in \S \ref{sec.Markov}-\ref{sec.main}, we show that it suffices to observe only 
``good'' elements in an orbit. 
The existence of such ``good'' elements follows from ergodic theorems.

To consider dynamics of $\omega$ on the surface, we need to take representatives of mapping classes.
Let $\mathrm{Diff}^{+}(S)$ denote the space of orientation preserving diffeomorphisms on $S$.
Let $w_{n}\in\mathrm{Diff}^{+}(S)$ be a representative of $\omega_{n}$ and $\bold{w} := (w_{n})_{n\in\mathbb{Z}}$.
Another difficulty occurs after taking representatives, that is, we can not use ergodic theorems.
This is because we can not take representatives so that they are compatible with the shift maps on $\mathrm{MCG}(S)^{\mathbb{Z}}$, denoted $\theta$.
Notations $(\bold{w},n)$ are used instead of $\theta^{n}\omega$ to warn readers this issue.
Our goal in \S \ref{sec.Markov} is to prove the following theorem.
\begin{thm}\label{thm.symbolic}
There exist a random subshift of finite type $(\{\Sigma_{A}(\bold{w},n)\}_{n\in\mathbb{Z}},\sigma)$ such that the following diagram commutes for any 
$n\in\mathbb{Z}$.
$$
\begin{CD}
\Sigma_{A}(\bold{w},0) @>\sigma^{n} >> \Sigma_{A}(\bold{w},n)\\
@Vp(\bold{w},0)VV @VVp(\bold{w},n)V \\
S@>\omega_{n}^{-1} >>S
\end{CD}
$$
where $p(\bold{w},n):\Sigma(\bold{w},n)\rightarrow S$ is a continuous surjective map and $\sigma$ is the shift map.
\end{thm}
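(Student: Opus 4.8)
The plan is to adapt the construction of a Markov partition for a single pseudo-Anosov map from \cite[Expos\'e 9-10]{FLP} to the non-autonomous setting of a random walk, replacing the single invariant pair of stable/unstable foliations by a pair of foliations that track the forward and backward trajectories of $\omega$. First I would fix, for $\mathbb{P}$-a.e.\ $\omega$, a pair of transverse measured foliations obtained as the forward and backward limits of the bi-infinite path on the Thurston boundary, and use the representatives $w_{n}$ to transport this foliation data along the orbit. With these foliations in hand I would build, at each time $n$, a semi-Markov partition $\mathcal{R}(\bold{w},n)$ of $S$ into birectangles whose sides lie on leaves of the transported foliations, in the sense of Definition \ref{defi.semi-Markov}; the defining requirement is that the image of each rectangle at time $n$ under the appropriate representative stretches completely across the rectangles it meets at time $n+1$ in the unstable direction while being compressed in the stable direction.

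From such a sequence of partitions I would read off transition matrices, whose $(i,j)$ entry records whether the image of the $i$-th rectangle at time $n$ crosses the $j$-th rectangle at time $n+1$, and declare $\Sigma_{A}(\bold{w},n)$ to be the space of admissible bi-infinite itineraries in the formalism of \S\ref{sec.random subshift}. The projection $p(\bold{w},n)\colon\Sigma_{A}(\bold{w},n)\to S$ would be defined by the classical nested-intersection rule: to an admissible sequence one associates the decreasing intersection of the images and preimages of the rectangles prescribed by its coordinates. Contraction along the stable direction together with expansion along the unstable direction guarantees that this intersection is a single point, so the map is well defined; continuity is immediate from the nesting, and surjectivity follows because every point of $S$ carries at least one admissible itinerary recording its position relative to the partitions.

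The commutativity of the diagram would then be verified directly: under the nested-intersection rule, shifting an itinerary by $\sigma$ corresponds to advancing the partition index by one and applying the corresponding representative to $S$, so that iterating the one-step relation shows $p(\bold{w},n)\circ\sigma^{n}$ and $\omega_{n}^{-1}\circ p(\bold{w},0)$ assign the same point of $S$ to each itinerary. Since the construction uses mapping-class data only through the composed maps, the square descends to $\mathrm{MCG}(S)$ despite the ambiguity in the choice of representatives $w_{n}$, which is exactly what is needed to phrase the bottom row as $\omega_{n}^{-1}$.

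The main obstacle, as flagged in the introduction, is that individual steps $\omega_{n}$ need not be pseudo-Anosov and the orbit $\{\omega_{n}X\}$ may have arbitrarily large backtrack, so the Markov property can fail at ``bad'' times. I expect the heart of the argument to be the use of ergodic theorems to select a sequence of ``good'' times at which the composition of consecutive steps behaves like a genuine pseudo-Anosov map, with definite expansion and contraction constants and controlled tracking of a Teichm\"uller geodesic; one then groups each block of bad steps with an adjacent good step and defines the semi-Markov structure on this coarsened time scale. Showing that the rectangles can be propagated consistently across the bad blocks, and that the nested intersections still shrink to single points so that each $p(\bold{w},n)$ remains well defined and continuous, is the delicate point on which the whole construction rests.
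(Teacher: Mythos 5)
Your strategy coincides with the paper's at every structural point: limit foliations $F_{\pm}(\omega)$ from the boundary convergence of Theorem \ref{thm.KM}, birectangle partitions in the sense of Definition \ref{defi.semi-Markov}, transition matrices recording interior intersections, the nested-intersection coding map, and commutativity of the square by construction. But two of your steps have genuine gaps, and they sit exactly at the points you yourself flag as delicate. First, surjectivity: the assertion that ``every point of $S$ carries at least one admissible itinerary'' is false as stated. A point lying on $\partial\mathcal{R}_{n}$ for some (or all) $n$ --- for instance a point on a singular leaf used as a side of the partition --- need not lie in the interior of any rectangle at those times, and your matrices only record intersections of interiors, so no admissible itinerary is directly available for it. The paper's Lemma \ref{lem.p-surj} circumvents this with a Baire category argument: each $V_{j}=\bigcup_{i} w_{j}(\mathrm{int}(R_{i}^{j}))$ is open and dense, hence $U=\bigcap_{j}V_{j}$ is dense, every point of $U$ has an itinerary, and then density of $U$ together with compactness of $\Sigma_{A}(\bold{w},n)$ and continuity of $p(\bold{w},n)$ forces the image to be all of $S$. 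Some such step is indispensable.

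Second, your mechanism for the good times does not work as described and differs from what the paper actually does. You propose selecting times at which the composed steps ``behave like a genuine pseudo-Anosov with definite expansion and contraction constants''; the ergodic theorems give you no such uniform constants for compositions along a sample path, and the paper never establishes them. Instead, all partitions are anchored to the \emph{single} geodesic $\Gamma(F_{+}(\omega),F_{-}(\omega))$: good times are those at which $(F_{+}(\theta^{n}\omega),F_{-}(\theta^{n}\omega))$ returns to fixed positive-measure neighborhoods $U_{\pm}$ (positive density by ergodicity, with $\nu(U_{+}),\check\nu(U_{-})>0$), further thinned to record times for $d_{\mathcal{T}}(X_{0},\omega_{n}X''_{n})$, so that the points $\omega_{n}X_{n}$ proceed monotonically along $\Gamma(\omega)$ with sublinear deviation from $\Gamma(Ln)$ (Lemma \ref{lem.Tio}, resting on Tiozzo's Theorem \ref{thm.Tio} and Kingman's theorem). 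The single-point property (Lemma \ref{lem.defi-p}) then uses positivity of the drift $L$ for the horizontal direction (the $\mu_{+}$-measure-$1$ arcs at time $m$ have $\mu_{+}(\omega,n)$-measure roughly $e^{-L(n-m)}$), while the vertical direction is handled not by contraction at all but by density of the infinite singular leaves of the uniquely ergodic $F_{+}$, since the arcs $\tau_{m}$ lengthen as $m\to+\infty$. Finally, your ``stretches completely across'' requirement does not by itself deliver condition (M2); the paper needs the specific refinement $\mathcal{R}_{j}=w_{j}^{-1}(\mathcal{R}'_{i}\vee\mathcal{R}'_{j}\vee\mathcal{R}'_{k})$ over three consecutive good times, with the pulled-back partitions $\mathcal{R}_{n}=w_{n}^{-1}w_{m}\mathcal{R}_{m}$ across bad blocks, to make every intersection $w_{m}R_{m}\cap w_{n}R_{n}$ a single birectangle (Lemma \ref{lem.Markov}).
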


The topological entropy of $\sigma$ in Theorem \ref{thm.symbolic} can be defined as the growth rate of the number of cylinder sets of length $n$ in $\Sigma_A(\bold{w},0)$.
Let $h(\sigma)$ denote the topological entropy of $\sigma$.
The fact $h(\sigma)\leq L_{\mathcal{T}}$ can be easily observed (Lemma \ref{lem.p-entropy}).
For pseudo-Anosovs, the facts of type Theorem \ref{thm.symbolic} and $h(\sigma)\leq L_{\mathcal{T}}$ suffice to prove a theorem of type Theorem \ref{thm.main}.
However, we need to vary structures of $S$ to construct  a semi-Markov partition.
Hence we need to discuss how structures, especially the Lebesgue number of a fixed open covering, vary as the steps.
The Lebesgue numbers are discussed in \S \ref{sec.Lebesgue}, 
and the rest of \S \ref{sec.main} is devoted for a proof of $h(\omega) \leq h(\sigma)$.

\section{Preliminaries}\label{sec.pre}
In this section, we prepare terminologies and basic facts which we need to prove Theorem \ref{thm.main}.

\subsection{ Teichm\"uller space}
We briefly recall the Teichm\"uller spaces and related facts.
Readers should refer \cite{FM,FLP} for more details.
Let $S$ be a closed orientable surface of genus $g(S)>1$.
A {\em marked Riemann surface} is a pair of a Riemann surface 
$\mathcal{X}$ and a homeomorphism, called a {\em marking}, $f:S\rightarrow \mathcal{X}$.
Two marked Riemann surfaces $(\mathcal{X}_{i}, f_{i}:S\rightarrow \mathcal{X}_{i}),~(i = 1,2)$ are said to be {\it Teichm\"uller equivalent}
if there is a biholomorphic map $\phi:\mathcal{X}_{1}\rightarrow\mathcal{X}_{2}$ such that $\phi\circ f_{1}$ is homotopic to $f_{2}$.
The Teichm\"uller space $\mathcal{T}(S)$ of $S$ is the space of marked Riemann surfaces modulo Teichm\"uller equivalence.
Since each marking defines a complex structure on $S$ by pullback,
we often confuse a point $X\in\mathcal{T}(S)$ with a complex structure on $S$.
The mapping class group $\mathrm{MCG}(S)$ acts on $\mathcal{T}(S)$ so that for $\varphi\in\mathrm{MCG}(S)$
with a representative $\psi\in\mathrm{Diff}^{+}(S)$,
$\varphi\cdot(\mathcal{X}, f:S\rightarrow\mathcal{X}) = 
(\mathcal{X}, f\circ\psi^{-1}:S\rightarrow\mathcal{X})$.

A holomorphic quadratic differential on $X\in\mathcal{T}(S)$ is a family of holomorphic maps $q = \{q_{\alpha}\}$ each defined 
on $z_{\alpha}(U_{\alpha})$ of a complex chart $U_{\alpha}\subset X,~z_{\alpha}:U_{\alpha}\rightarrow \mathbb{C}$ so that if 
$U_{\alpha}\cap U_{\beta}\not=\emptyset$ then 
$$q_{\beta}(z_{\beta}) = q_{\alpha}\circ z_{\alpha\beta}(z_{\beta})\cdot(z_{\alpha\beta}'(z_{\beta}))^{2}$$ where
$z_{\alpha\beta} := z_{\alpha}\circ z_{\beta}^{-1}$.
For $X\in\mathcal{T}(S)$, let $Q(X)$ denote the space of quadratic differentials.
The vertical (resp. horizontal) trajectories of a quadratic differential $q$ are curves $z(t)$ such that 
$q(z(t))z'(t)^{2}\in \mathbb{R}_{>0}$ (resp. $\mathbb{R}_{<0}$).
For each smooth arc $\tau$, 
the transverse measures on the vertical and horizontal trajectories are defined by
$\int_{\tau}|\Im q(z)^{1/2}dz|$ and $\int_{\tau}|\Re q(z)^{1/2}dz|$ respectively.
Thus each $q\in Q(X)$ defines two measured foliations called {\em vertical } and {\em horizontal} foliations as
the vertical and horizontal trajectories equipped with the transverse measures respectively.
A theorem of Teichm\"uller says that given two points $X,Y\in\mathcal{T}(S)$, there exists a quasi-conformal map $T:X\rightarrow Y$ and quadratic differentials $q_{X}\in Q(X)$ and $q_{Y}\in Q(Y)$ 
such that the map $T$ maps $q_{X}$ to $q_{Y}$ so that it stretches (resp. contracts) the horizontal (resp. vertical) foliations. 
The logarithm of the stretch factor coincides with the Teichm\"uller distance $d_{\mathcal{T}}(X,Y)$.
By integrating the square root of a quadratic differential $q$, we have a singular Euclidean metric on $X$.
With respect to the singular Euclidean metric,
 the length of a smooth arc $\tau'$, denoted by $|\tau'|_{q}$, is equal to 
$\int_{\tau'}|q^{1/2}dz|$.
For $q\in Q(X)$, we define the norm of $q$ by $||q|| := \int\int_{X}|q|$.
Let $Q_{1}(X):=\{q\in Q(X)\mid ||q|| = 1\}$.
We denote by $\mathcal{PMF}(S)$ the space of projective measured foliations.
We consider the Thurston compactification $\bar{\mathcal{T}}(S):=\mathcal{T}(S)\cup\mathcal{PMF}(S)$ on which
$\mathrm{MCG}(S)$ acts continuously.
By the work of Thurston, $\mathcal{PMF}(S)$ is homeomorphic to the sphere of 
dimension $6g(S)-6$ \cite{Thu}.
We now recall the work of Hubbard-Masur.
\begin{thm}[\cite{HM}]\label{thm.HM}
The map $Q_{1}(X)\rightarrow\mathcal{PMF}(S)$ associating the equivalence class of 
the horizontal foliation to each $q\in Q_{1}(X)$ is a homeomorphism.
\end{thm}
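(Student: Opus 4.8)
The plan is to prove that the stated map $\Phi\colon Q_1(X)\to\mathcal{PMF}(S)$, sending $q$ to the projective class of its horizontal foliation $\mathcal{F}_h(q)$, is a continuous injection between two spheres of the same dimension, and then to promote this to a homeomorphism by a soft topological argument rather than by constructing an inverse directly. First I would record continuity. Using the embedding of $\mathcal{PMF}(S)$ into the projectivized space of nonnegative functions on the set $\mathcal{S}$ of isotopy classes of essential simple closed curves, $F\mapsto\big(i(F,\gamma)\big)_{\gamma\in\mathcal{S}}$, it suffices to check that for each fixed $\gamma\in\mathcal{S}$ the function $q\mapsto i(\mathcal{F}_h(q),\gamma)=\inf_{\alpha\simeq\gamma}\int_{\alpha}|\Re q^{1/2}dz|$ is continuous on $Q_1(X)$. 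This follows because, as $q$ varies in the finite-dimensional space $Q(X)$, the branches of $q^{1/2}$ converge uniformly on compact subsets away from the finitely many zeros of $q$, with controlled behaviour near the zeros, so the transverse measures vary continuously.

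The crux of the theorem, and the step I expect to be the main obstacle, is injectivity, which amounts to the uniqueness assertion that a holomorphic quadratic differential on $X$ is determined up to positive scale by the measure equivalence class of its horizontal foliation. Since rescaling $q$ by $t>0$ scales the transverse measure of $\mathcal{F}_h(q)$ by $t^{1/2}$ while scaling the norm by $t$, projective equality of two images becomes genuine equality of horizontal foliations after rescaling one differential, and the norm-one constraint then forces the rescaling factor to be $1$; so it suffices to prove: if $q_0,q_1\in Q(X)$ have equivalent horizontal measured foliations, then $q_0=q_1$. The cleanest route passes to the orientation double cover $\pi\colon\hat X\to X$, on which the pullback $\pi^{*}q$ becomes the square $\omega^{2}$ of a holomorphic abelian differential $\omega$ that is anti-invariant under the deck involution; the horizontal foliation lifts to $\{\Re\,\omega=\mathrm{const}\}$ with transverse measure $|\Re\,\omega|$. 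Now $\Re\,\omega$ is a harmonic $1$-form, and $\omega$ is recovered from it via $\omega=\Re\,\omega+i\,{}^{*}(\Re\,\omega)$; since a harmonic form is determined by its de Rham class, $\omega$ is determined by $[\Re\,\omega]\in H^{1}(\hat X;\mathbb{R})$. Thus uniqueness reduces to showing that equivalent horizontal foliations produce the same horizontal period class in the anti-invariant part of $H^{1}(\hat X;\mathbb{R})$. This matching is the delicate point, because the transverse measure only records the unsigned integrals $\int_{\gamma}|\Re\,\omega|$ rather than the signed periods $\int_{\gamma}\Re\,\omega$; reconciling the two requires putting the measured foliations into a common normal form up to Whitehead equivalence and tracking the orientations of leaves carefully, and this is where I anticipate spending most of the work.

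Finally I would conclude without ever computing an inverse. As recalled above $\mathcal{PMF}(S)$ is a sphere, and $Q_1(X)$ is the unit sphere of the finite-dimensional real vector space $Q(X)$, so both are closed topological manifolds; a dimension count shows they have equal dimension. Invariance of domain then implies that the continuous injection $\Phi$ is an open map, so its image is open in $\mathcal{PMF}(S)$; compactness of $Q_1(X)$ makes the image closed as well, and since $\mathcal{PMF}(S)$ is connected the image must be all of $\mathcal{PMF}(S)$, giving surjectivity. A continuous bijection from a compact space to a Hausdorff space is automatically a homeomorphism, which completes the proof.
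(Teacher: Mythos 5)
Be aware first that the paper contains no proof of this statement to compare against: it is imported wholesale from Hubbard--Masur \cite{HM}, so your proposal has to be judged against the actual content of that theorem. Your topological endgame is sound and is in fact the standard skeleton for this result: $Q_1(X)$ is the unit sphere of the real $(6g-6)$-dimensional vector space $Q(X)$, hence a sphere of dimension $6g-7$, which matches $\mathcal{PMF}(S)\cong S^{6g-7}$ (the paper's ``$6g(S)-6$'' is a typo for the dimension of $\mathcal{MF}(S)$, not $\mathcal{PMF}(S)$, and your dimension count is the correct one); a continuous injection between closed connected manifolds of equal dimension is surjective by invariance of domain plus compactness, and a continuous bijection from a compact space to a Hausdorff space is a homeomorphism. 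The continuity step and the reduction from projective to genuine equality of horizontal foliations via the norm-one normalization are also fine. But injectivity --- that $q\in Q(X)$ is determined by the measure class of its horizontal foliation --- is not a step of the proof; it is essentially the whole analytic content of the theorem (it is equivalent to the Marden--Strebel heights theorem), and your proposal does not prove it: you explicitly defer ``the delicate point'' of reconciling unsigned transverse measures with signed periods. Reducing a theorem to its hardest ingredient and proving the soft remainder leaves a genuine gap.

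Moreover, the double-cover route you sketch for that ingredient has two concrete obstructions beyond bookkeeping. First, the orientation double cover $\pi\colon\hat X\to X$ is branched over the odd-order zeros of $q$, so it depends on $q$: two differentials $q_0,q_1$ whose horizontal foliations are merely Whitehead equivalent can have different singularity data, hence non-isomorphic branched covers, and there is then no common surface $\hat X$ on which to compare the harmonic forms $\Re\,\omega_0$ and $\Re\,\omega_1$. Second, even on a fixed cover, measure equivalence of the foliations hands you only the densities $\lvert\Re\,\omega_i\rvert$ up to isotopy and Whitehead moves; promoting this to equality of the de Rham classes $[\Re\,\omega_0]=[\Re\,\omega_1]$ in the anti-invariant part of $H^1(\hat X;\mathbb{R})$ requires a coherent choice of leaf orientations that survives those moves, and making that work is precisely where the published proofs (Hubbard--Masur's original analysis, the minimal-norm/heights argument of Marden--Strebel, or Wolf's harmonic-maps approach) expend their real effort. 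As written, your argument would need either to carry out this matching in detail or to substitute one of those uniqueness theorems as a cited input; without one of these, the injectivity claim, and with it the theorem, is unproven.
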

Let $F,G\in\mathcal{PMF}(S)$ be transverse filling projective measured foliations.
Let $\Gamma(F,G)\subset \mathcal{T}(S)$ denote the Teichm\"uller geodesic corresponding to a quadratic differential with horizontal and vertical foliation $F$ and $G$ respectively (see \cite{GM} for the existence of such geodesics).
A projective measured foliation is called {\em uniquely ergodic} if its supporting foliation admits only one transverse measure up to scale.
Let $\mathcal{UE}(S)\subset \mathcal{PMF}(S)$ denote the space of uniquely ergodic foliations.

\subsection{Random walk on group}
Let $G$ be a countable group and $\mu:G\rightarrow [0,1]$ a probability measure.
By $\mathbb{Z}_{+}$ (resp. $\mathbb{Z}_{-}$), we denote the space of positive (resp. negative) integers.
For group elements $x_{1}, \dots, x_{n}\in G$, the subset $$[x_{1}, \dots, x_{n}]:=\{\omega = (\omega_{i})\in G^{\mathbb{Z}_{+}}\mid \omega_{i} = x_{i} \text{ for } 1\leq i\leq n\}$$
is called a {\em cylinder set}.
The probability measure $\mu$ induces a probability measure $\mathbb{P}$ on the space of sample paths $G^{\mathbb{Z}_{+}}$ so that 
$$\mathbb{P}([x_{1}, \dots, x_{n}]) = \mu(x_{0}^{-1}x_{1})\mu(x_{1}^{-1}x_{2})\cdots\mu(x_{n-1}^{-1}x_{n}),$$
where $x_{0}$ is the initial element which is assumed to be the identity unless otherwise stated.
We also consider the {\em reflected measure} $\check\mu(g):=\mu(g^{-1})$.
Let $\check{\mathbb{P}}$ be the probability measure on $G^{\mathbb{Z}_{-}}$ induced by $\check\mu$.
Then by the map $\omega = (\omega_{n})_{n\in\mathbb{Z}}\mapsto ((\omega_{n})_{n\in\mathbb{Z}_{+}}, (\omega_{n})_{n\in\mathbb{Z}_{-}})$, 
the probability measure $\mathbb{P}\times\check{\mathbb{P}}$ induces a probability measure on $G^{\mathbb{Z}}$
 which we again denote by $\mathbb{P}$.
We define the {\em Bernoulli shift}, denoted by $\theta$, as for any $k\in\mathbb{Z}$,
$$(\theta^{k} \omega)_{n} := \omega_{k}^{-1}\omega_{n+k} \text{,  }\forall n\in\mathbb{Z}.$$

Recall that a subgroup of $\mathrm{MCG}(S)$ is called {\em non-elementary} 
if it contains two pseudo-Anosov elements with disjoint fixed point sets in $\mathcal{PMF}(S)$.
From now on, we consider the random walk on $\mathrm{MCG}(S)$ which is determined by a probability measure $\mu$ which satisfies the following condition.
\begin{condi}\label{cond}
The probability measure $\mu:\mathrm{MCG}(S)\rightarrow [0,1]$ satisfies that
\begin{itemize}
\item $\mu$ has finite first moment with respect to the Teichm\"uller metric on $\mathcal{T}(S)$ 
i.e. for any $X\in\mathcal{T}(S)$, $\sum_{g\in\mathrm{MCG}(S)} \mu(g)d_{\mathcal{T}}(X, gX)<\infty$, and
\item the support of $\mu$ generates a non-elementary subgroup of $\mathrm{MCG}(S)$.
\end{itemize}
\end{condi}

\subsection{Topological entropy, drift, and Lyapunov exponent}
Let $\mathcal{A}=\{\mathcal{A}_{i}\}_{i\in I}$ and
$\mathcal{B}=\{\mathcal{B}_{j}\}_{j\in J}$ be open coverings of $S$.
Since $S$ is compact, each open covering has a finite subcover.
Let $N(\mathcal{A})$ denote the number of sets in a subcover of $\mathcal{A}$ with minimal cardinality.
$\mathcal{B}$ is said to be a {\em refinement} of a cover $\mathcal{A}$, denoted $\mathcal{A}\prec\mathcal{B}$,
 if for any $B\in\mathcal{B}$, there is $A\in\mathcal{A}$ such that $B\subset A$.
 It can readily be seen that if $\mathcal{A}\prec\mathcal{B}$ then $N(A)\leq N(B)$.
We denote by $\mathcal{A}\vee\mathcal{B}$ the open cover $\{A_i\cap B_j\}_{i\in I, j\in J}$.
For an open covering $\mathcal{A}$ of $S$ and a metric $d$ on S, the {\em Lebesgue number} $\delta_{d}(\mathcal{A})$ with respect to $d$
is defined to be
$$\inf_{x\in S}\sup\{r>0\mid B_{r}(x)\subset A \text{ for some } A\in \mathcal{A}\},$$
where $B_{r}(x)$ is the open ball centered at $x$ of radius $r$ with respect to $d$.
Let $r>0$ be less than the Lebesgue number of $\mathcal{A}$.
Then the covering consisting of all open balls of radius $r$ refines $\mathcal{A}$.

\begin{defi}[Topological entropy. c.f.\cite{AKM}]
Let $\omega = (\omega_n)_{n\in\mathbb{Z}}\in \mathrm{MCG}(S)^{\mathbb{Z}}$.
We first choose an arbitrary representative $w_{n}\in \mathrm{Diff}^{+}(S)$ of $\omega_{n}$ for each $n\in\mathbb{Z}$.
Let $\bold{w}:=(w_{n})_{n\in\mathbb{Z}}$.
For an open cover $\mathcal{A}$, let $N_n(\bold{w},\mathcal{A}):= N(\mathcal{A}\vee w_1(\mathcal{A})\vee\cdots\vee w_{n-1}(\mathcal{A}))$.
We define
$$h(\bold{w},\mathcal{A}) := \limsup_{n\rightarrow\infty}\frac{1}{n}\log N_n(\bold{w},\mathcal{A}).$$
Note that if $\mathcal{A}\prec\mathcal{B}$, then $h(\bold{w},\mathcal{A})\leq h(\bold{w},\mathcal{B})$.
The {\em topological entropy} of $\bold{w}$ is 
$$h(\bold{w}) := \sup_{\mathcal{A}} h(\bold{w},\mathcal{A}),$$
where the supremum is taken over all open coverings of $S$.
Finally we define $$h(\omega):= \inf_{\bold{w}}h(\bold{w})$$ where the infimum is taken over all representatives of $\omega$.
\end{defi}
\begin{rmk}
Unlike the definition of topological entropy of surface automorphisms, we do not take inverses.
This is natural because when we consider random walks, we multiply new elements from the right.
\end{rmk}
%

We define the drift of random walks, which we may regard as a ``translation distance'' of the random walk.
\begin{defi}
Let $(X,d_{X})$ be a metric space on which $\mathrm{MCG}(S)$ acts isometrically.
Suppose the probability measure $\mu$ has finite first moment with respect to $d_{X}$, i.e.
$$\sum_{g\in\mathrm{MCG}(S)}\mu(g)d_{X}(x,gx)<\infty,$$
where $x\in X$ is arbitrary.
By Kingman's subadditive ergodic theorem, the limit
$$\lim_{n\rightarrow \infty} \frac{1}{n}d_{X}(x,\omega_{n}x)$$
exists for $\mathbb{P}$-a.e. $\omega$ and this limit is independent of $x$ and $\omega$.
This limit is called the {\em drift} of $\omega\in\mathrm{MCG}(S)^{\mathbb{Z}}$ with respect to $d_{X}$.
\end{defi}

Let $d_{a}$ and $d_{\mathcal{T}}$ denote the distance on $\mathcal{T}(S)$ by Thurston's Lipschitz metric and the Teichm\"uller metric respectively.
We here recall the work of Choi-Rafi. 
\begin{thm}[{\cite[Theorem B]{CR}}]\label{thm.CR}
There is a constant $c$ depending on the surface $S$ and on $\delta$ such that for any $X, Y$
in the $\delta$-thick part of $\mathcal{T}(S)$,
$d_{\mathcal{T}} (X, Y)$ and $d_{a}(X, Y)$
differ from one another by at most $c$.
\end{thm}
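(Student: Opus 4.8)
The plan is to reduce the comparison of the two metrics to a curve-by-curve comparison of extremal length and hyperbolic length, and to let the hypothesis of $\delta$-thickness enter only through a uniform bound relating these two quantities. First I would invoke the two classical length formulas. By Kerckhoff's theorem the Teichm\"uller metric is computed by extremal lengths,
\[
d_{\mathcal{T}}(X,Y)=\tfrac12\log\sup_{\alpha}\frac{\mathrm{Ext}_Y(\alpha)}{\mathrm{Ext}_X(\alpha)},
\]
where the supremum runs over isotopy classes of essential simple closed curves $\alpha$ and $\mathrm{Ext}_X(\alpha)$ denotes the extremal length of $\alpha$ on $X$; by Thurston's theorem the Lipschitz metric is computed by hyperbolic lengths,
\[
d_{a}(X,Y)=\log\sup_{\alpha}\frac{\ell_Y(\alpha)}{\ell_X(\alpha)},
\]
with $\ell_X(\alpha)$ the hyperbolic length. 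Since both are suprema of logarithmic length ratios over the same index set, it suffices to compare $\mathrm{Ext}$ with $\ell$ uniformly on the $\delta$-thick part.

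The heart of the argument is the estimate that, on the $\delta$-thick part, extremal length and the \emph{square} of hyperbolic length are comparable up to a multiplicative constant $C=C(S,\delta)$:
\[
\frac{1}{C}\,\ell_X(\alpha)^2\ \le\ \mathrm{Ext}_X(\alpha)\ \le\ C\,\ell_X(\alpha)^2 .
\]
The lower bound is elementary and holds on all of $\mathcal{T}(S)$: the hyperbolic metric is an admissible competitor in the definition of extremal length and its area is the topological constant $2\pi|\chi(S)|$, so $\mathrm{Ext}_X(\alpha)\ge \ell_X(\alpha)^2/(2\pi|\chi(S)|)$. The upper bound is where thickness is essential: recalling that the extremal length of a simple closed curve is the reciprocal of the supremal modulus of embedded annuli in its homotopy class, one bounds $\mathrm{Ext}_X(\alpha)$ from above by producing, around the geodesic representative of $\alpha$, an embedded annulus of modulus at least $c(\delta)/\ell_X(\alpha)^2$, using the collar lemma together with the systole lower bound guaranteed by $\delta$-thickness. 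Uniformity of $C$ over all $\alpha$, including arbitrarily long ones, I would extract from the compactness of the $\delta$-thick part of moduli space $\mathcal{T}(S)/\mathrm{MCG}(S)$.

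Granting this comparison, for $X,Y$ both in the $\delta$-thick part the two length ratios agree after squaring,
\[
\frac{\mathrm{Ext}_Y(\alpha)}{\mathrm{Ext}_X(\alpha)}\ \asymp\ \Big(\frac{\ell_Y(\alpha)}{\ell_X(\alpha)}\Big)^{2},
\]
with multiplicative error at most $C^{2}$. Taking the supremum over $\alpha$ and then $\tfrac12\log$, the square cancels the factor $\tfrac12$ in Kerckhoff's formula and the multiplicative error becomes additive, giving $|d_{\mathcal{T}}(X,Y)-d_{a}(X,Y)|\le \log C$; setting $c=\log C$ finishes the proof. The same computation applied to $d_{\mathcal{T}}(Y,X)$ recovers the a priori different reverse Lipschitz distance, which reflects the fact that the asymmetry of $d_a$ disappears coarsely on the thick part, so no separate argument for the two directions is needed.

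The hard part will be the uniform upper bound $\mathrm{Ext}_X(\alpha)\le C\,\ell_X(\alpha)^2$. Extremal length is quadratic while hyperbolic length is linear in the transverse measure, so the two can only be matched after squaring, and in the thin part the match degenerates: as $\ell_X(\alpha)\to 0$ one has $\mathrm{Ext}_X(\alpha)/\ell_X(\alpha)^2\to\infty$. The role of the hypothesis is precisely to exclude short curves, and the genuine technical content is converting the systole lower bound into a modulus lower bound that is uniform over all homotopy classes and over the entire thick part. I expect this, rather than the formal manipulation of the two length formulas, to be where the real work lies.
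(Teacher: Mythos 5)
First, note that the paper contains no proof of this statement to compare against: Theorem~\ref{thm.CR} is imported verbatim from Choi--Rafi \cite{CR}, so your proposal has to be judged as an attempt to reprove their Theorem~B. Your skeleton is the standard and correct one for the thick part: Kerckhoff's formula $d_{\mathcal{T}}(X,Y)=\tfrac12\log\sup_\alpha \mathrm{Ext}_Y(\alpha)/\mathrm{Ext}_X(\alpha)$, Thurston's formula $d_a(X,Y)=\log\sup_\alpha \ell_Y(\alpha)/\ell_X(\alpha)$, a uniform two-sided comparison $\mathrm{Ext}_X(\alpha)\asymp_{C(S,\delta)}\ell_X(\alpha)^2$ on the $\delta$-thick part, and the final bookkeeping in which the square cancels the $\tfrac12$ and the multiplicative error $C^2$ becomes the additive constant $\log C$ (your remark that this simultaneously handles both orders of the asymmetric $d_a$ is also correct). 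The lower bound $\mathrm{Ext}_X(\alpha)\ge \ell_X(\alpha)^2/(2\pi|\chi(S)|)$, using the hyperbolic metric as an admissible competitor, is fine.

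The genuine gap is in your proposed proof of the crucial upper bound $\mathrm{Ext}_X(\alpha)\le C(S,\delta)\,\ell_X(\alpha)^2$. The collar lemma does \emph{not} produce an embedded annulus of modulus $c(\delta)/\ell_X(\alpha)^2$: the collar about a simple closed geodesic of length $\ell$ has half-width $\operatorname{arcsinh}(1/\sinh(\ell/2))\approx 2e^{-\ell/2}$, so its modulus decays like $e^{-\ell/2}/\ell$ --- exponentially in $\ell$, not polynomially --- and on a $\delta$-thick surface a long geodesic really does come exponentially close to itself, so no better collar exists. This is precisely the content of Maskit's comparison of hyperbolic and extremal lengths, whose collar-based upper bound is $\mathrm{Ext}_X(\alpha)\le \tfrac{\ell}{2}e^{\ell/2}$, which is useless here for long curves. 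Nor can compactness of the $\delta$-thick part of moduli space repair this by itself, since that compactness quantifies over $X$ while the failure is in the quantification over the infinitely many classes $\alpha$ at a fixed $X$. The correct repair is to projectivize in the curve variable: both $\mathrm{Ext}_X$ (by Kerckhoff) and $\ell_X$ (by Thurston) extend continuously to the space of measured foliations, homogeneous of degree $2$ and $1$ respectively and positive away from zero, so $F\mapsto \mathrm{Ext}_X(F)/\ell_X(F)^2$ is a continuous positive function on the compact space $\mathcal{PMF}(S)$ and hence bounded above and below for each fixed $X$; since this bound is invariant under the diagonal $\mathrm{MCG}(S)$-action, it descends to a continuous function on the compact quotient of $(\text{$\delta$-thick part})\times\mathcal{PMF}(S)$, which yields the uniform constant $C(S,\delta)$. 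With that substitution in place of the collar argument, your proof closes, and it is then essentially the route by which the thick-part comparison in \cite{CR} is established.
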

By Theorem \ref{thm.CR},
if the probability measure $\mu$ has finite first moment with respect to the Teichm\"uller metric,
then it also has finite first moment with respect to Thurston's Lipschitz metric.
Let $L_{a}$ (resp. $L_{\mathcal{T}}$) denote the drift of $\omega$ with respect to $d_{a}$ (resp. $d_{\mathcal{T}}$).
Since the drifts are also independent of the choice of base points, by taking a point in the thick part of $\mathcal{T}(S)$, 
we have $L_{a} = L_{\mathcal{T}}$ by Theorem \ref{thm.CR}.
We let $L:= L_{a} = L_{\mathcal{T}}$.

In \cite{Kar}, Karlsson proved the following.
\begin{thm}[\cite{Kar}]\label{thm.Kar}
There exists $\lambda$ such that for $\mathbb{P}$-a.e. $\omega\in \mathrm{MCG}(S)^\mathbb{Z}$,
 for any isotopy class $\alpha$ of essential simple closed curves and Riemannian metric $\rho$ of $S$,
 $$\lim_{n\rightarrow\infty}\frac{1}{n}\log l_{\rho}(\omega_n^{-1}\alpha) = \log\lambda,$$
where $l_\rho(\alpha)$ denotes the infimum of the length of curves in $\alpha$ with respect to $\rho$.
Moreover $\log\lambda$ coincides with $L$.
\end{thm}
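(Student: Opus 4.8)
The plan is to compare the exponential growth of curve lengths with the drift $L$ in Thurston's asymmetric metric, and to extract the lower bound from boundary convergence of the random walk. First I would remove the dependence on the Riemannian metric $\rho$: since $S$ is compact, any two Riemannian metrics are bi-Lipschitz, and each is bi-Lipschitz to the hyperbolic metric $\rho_{X}$ of the base point $X\in\mathcal{T}(S)$. Hence for any sequence of curves the limit of $\tfrac1n\log l_{\rho}(\cdot)$ exists if and only if that of $\tfrac1n\log l_{X}(\cdot)$ does, with the same value, so it suffices to treat the hyperbolic length $l_{X}$ in the fixed structure $X$. By the equivariance of lengths one has $l_{X}(\omega_{n}^{-1}\alpha)=l_{\omega_{n}X}(\alpha)$, which converts the problem into understanding how $l_{\omega_{n}X}(\alpha)$ grows as the orbit $\omega_{n}X$ escapes to infinity.

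For the upper bound I would use only the definition of $d_{a}$: it gives directly $l_{\omega_{n}X}(\alpha)\le e^{d_{a}(X,\omega_{n}X)}\,l_{X}(\alpha)$, so dividing by $n$ and letting $n\to\infty$ yields $\limsup_{n}\tfrac1n\log l_{X}(\omega_{n}^{-1}\alpha)\le L_{a}=L$, where I use that the drift $\tfrac1n\,d_{a}(X,\omega_{n}X)\to L$ exists for $\mathbb{P}$-a.e.\ $\omega$ by Kingman's theorem. This step is valid for \emph{every} isotopy class $\alpha$ and requires nothing beyond Condition \ref{cond}.

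The lower bound is the heart of the argument. The plan is as follows. By the theory of Kaimanovich--Masur, Condition \ref{cond} guarantees that for $\mathbb{P}$-a.e.\ $\omega$ the trajectory $\omega_{n}X$ converges in the Thurston compactification $\bar{\mathcal{T}}(S)$ to a point $F(\omega)\in\mathcal{UE}(S)$. Karlsson's noncommutative ergodic theorem then produces, from the positivity of the drift, a Teichm\"uller geodesic ray $\gamma_{\omega}$ issuing from $X$ and directed at $F(\omega)$ that is tracked sublinearly, i.e.\ $\tfrac1n\,d_{\mathcal{T}}(\omega_{n}X,\gamma_{\omega}(Ln))\to 0$. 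Because $F(\omega)$ is uniquely ergodic, Masur's criterion ensures that $\gamma_{\omega}$ remains in the $\delta$-thick part of $\mathcal{T}(S)$ for some $\delta>0$; hence Theorem \ref{thm.CR} lets me transfer the sublinear tracking from $d_{\mathcal{T}}$ to $d_{a}$, giving $\tfrac1n\bigl|\log l_{\omega_{n}X}(\alpha)-\log l_{\gamma_{\omega}(Ln)}(\alpha)\bigr|\to 0$. Finally, along a Teichm\"uller geodesic whose vertical foliation is the uniquely ergodic $F(\omega)$, the flat length (hence, in the thick part, the hyperbolic length) of a fixed curve $\alpha$ satisfies $l_{\gamma_{\omega}(t)}(\alpha)\asymp e^{t}\,i(\alpha,F(\omega))$ for large $t$, and $i(\alpha,F(\omega))>0$ since uniquely ergodic foliations are filling. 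Taking $t=Ln$ gives $\tfrac1n\log l_{\gamma_{\omega}(Ln)}(\alpha)\to L$, and the same limit therefore holds for $l_{\omega_{n}X}(\alpha)$.

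Combining the two bounds yields $\lim_{n}\tfrac1n\log l_{X}(\omega_{n}^{-1}\alpha)=L$; since $F(\omega)$ is filling, the positivity of the leading coefficient is uniform over all essential simple closed curves $\alpha$, and since the drift $L$ is $\mathbb{P}$-a.s.\ constant by ergodicity, setting $\log\lambda:=L$ gives the asserted $\omega$-independent limit. The main obstacle I expect is precisely the lower bound: one must produce a geodesic ray that realizes the drift \emph{exactly} and then transfer its clean flat-length asymptotics to the random trajectory. Unique ergodicity of the limiting foliation is what makes this possible, since it forces the ray into the thick part and thereby activates Theorem \ref{thm.CR}; without it the passage between $d_{\mathcal{T}}$ and $d_{a}$, and the length asymptotics along $\gamma_{\omega}$, could both fail.
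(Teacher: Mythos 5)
The paper offers no proof of this statement: it is quoted verbatim from Karlsson \cite{Kar}, whose argument runs through the horofunction boundary of Thurston's Lipschitz metric (via the Karlsson--Ledrappier multiplicative ergodic theorem and Walsh's computation of the horofunctions of $d_a$), not through geodesic tracking. So your route is genuinely different, and its first half is sound: the bi-Lipschitz reduction to a fixed hyperbolic length $l_X$, the equivariance $l_X(\omega_n^{-1}\alpha)=l_{\omega_nX}(\alpha)$, and the bound $\log l_{\omega_nX}(\alpha)-\log l_X(\alpha)\le d_a(X,\omega_nX)$ together with Kingman's theorem do give $\limsup_n \frac1n\log l_X(\omega_n^{-1}\alpha)\le L_a=L$ for every $\alpha$ at once, exactly as in Karlsson's upper bound.

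Your lower bound, however, has a genuine gap: the claim that ``because $F(\omega)$ is uniquely ergodic, Masur's criterion ensures that $\gamma_\omega$ remains in the $\delta$-thick part'' is the \emph{converse} of Masur's criterion, and that converse is false --- Cheung and Masur constructed divergent Teichm\"uller geodesics whose vertical foliation is uniquely ergodic, and for walks whose support contains large twist powers the tracked ray a.s.\ makes arbitrarily deep thin-part excursions, so no single $\delta$ can work. You use this false thickness twice: to invoke Theorem \ref{thm.CR} to pass from the $d_{\mathcal{T}}$-sublinear tracking (which, in the Teichm\"uller metric, is Tiozzo's Theorem \ref{thm.Tio} rather than a direct consequence of Karlsson's ergodic theorem) to $d_a$, and to identify flat with hyperbolic length at $\gamma_\omega(Ln)$; both steps break in the thin part. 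The gap is repairable, because thickness of the \emph{ray} is never needed: the orbit points $\omega_nX$ lie in a fixed thick part (the systole is $\mathrm{MCG}(S)$-invariant), so one can argue through extremal length --- Kerckhoff's inequality gives $\mathrm{Ext}_{\omega_nX}(\alpha)\ge e^{-2d_{\mathcal{T}}(\omega_nX,\gamma_\omega(Ln))}\,\mathrm{Ext}_{\gamma_\omega(Ln)}(\alpha)$, the unit-area differential $q_{Ln}$ along the ray gives $\mathrm{Ext}_{\gamma_\omega(Ln)}(\alpha)\ge l_{q_{Ln}}(\alpha)^2\ge e^{2Ln}\,i(\alpha,\cdot)^2$, and Maskit's comparison of extremal and hyperbolic length at the thick points $\omega_nX$ then yields $\liminf_n\frac1n\log l_{\omega_nX}(\alpha)\ge L$ using only the sublinear tracking; alternatively, restrict to the positive-density set of good times, the device used in \S\ref{sec.Markov}. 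One further slip, harmless but worth fixing: the exponentially growing coefficient is the intersection number with the foliation whose transverse measure \emph{expands} along the ray, which with the paper's conventions is the backward limit rather than $F(\omega)$; since almost surely both limit foliations are filling, positivity of the coefficient survives either way.
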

Note that for Theorem \ref{thm.Kar}, we do not need to take a representative of $\omega$.
Following \cite{DH}, we call $\lambda$ in Theorem \ref{thm.Kar} the {\em Lyapunov exponent} of the random walk.

Now we establish the following inequality.
\begin{lem}\label{lem.lemma1}
Let $\lambda$ be the Lyapunov exponent of the random walk determined by $\mu$.
For $\mathbb{P}$-a.e. $\omega$, we have
$$\log\lambda\leq h(\omega).$$
\end{lem}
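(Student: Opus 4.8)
The plan is to bound $h(\omega)$ from below by the exponential growth rate of the length of a fixed simple closed curve, which equals $\log\lambda$ by Theorem \ref{thm.Kar}. Since $h(\omega)=\inf_{\bold{w}}h(\bold{w})$ and $h(\bold{w})=\sup_{\mathcal{A}}h(\bold{w},\mathcal{A})$, it is enough to exhibit, for an arbitrary choice of representatives $\bold{w}=(w_n)$, a single open cover $\mathcal{A}$ with $h(\bold{w},\mathcal{A})\ge\log\lambda$. I would fix a hyperbolic metric $\rho$ on $S$ (so the injectivity radius is bounded below by some $\iota_0>0$), an essential simple closed curve $\alpha$ with $\rho$-geodesic representative $a$, and a finite cover $\mathcal{A}$ by balls of small radius; write $\mathcal{A}_n:=\mathcal{A}\vee w_1(\mathcal{A})\vee\cdots\vee w_{n-1}(\mathcal{A})$ and recall that a point $x$ lies in an element $U=\bigcap_{j=0}^{n-1}w_j(A_{i_j})$ of $\mathcal{A}_n$ exactly when $w_j^{-1}(x)\in A_{i_j}$ for all $j$, so $U$ records the ``itinerary'' of the partial trajectory $j\mapsto w_j^{-1}(x)$.

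The heart of the matter is to construct, for each $n$, a subset $E_n\subset S$ that is \emph{separated} for $\mathcal{A}_n$ --- meaning that for any two distinct $x,y\in E_n$ there is a time $0\le j\le n-1$ at which $w_j^{-1}(x)$ and $w_j^{-1}(y)$ lie in no common member of $\mathcal{A}$ --- with $|E_n|$ growing like $\lambda^{n}$. Indeed, such a set forces every subcover of $\mathcal{A}_n$ to contain at least $|E_n|$ members, since no single member of $\mathcal{A}_n$ contains two points of $E_n$; hence $N_n(\bold{w},\mathcal{A})\ge|E_n|$, and then $h(\bold{w},\mathcal{A})=\limsup_n\frac1n\log N_n(\bold{w},\mathcal{A})\ge\limsup_n\frac1n\log|E_n|\ge\log\lambda$, which is exactly the desired inequality. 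The exponential size of $E_n$ must come from the growth $l_\rho(\omega_n^{-1}\alpha)\sim\lambda^{n}$ of Theorem \ref{thm.Kar}: the natural candidate points are distributed along $a$ so that their images under $w_{n-1}^{-1}$ trace out the long curve $w_{n-1}^{-1}(a)$, whose length is at least $l_\rho(\omega_{n-1}^{-1}\alpha)$, and two of them become separated once their partial trajectories fall into different regions at some time $j$. This long stretched curve is the analogue of the expanding leaf used for pseudo-Anosov maps in \cite[Expos\'e 10]{FLP}.

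The step I expect to be the main obstacle is precisely where the random case diverges from a single pseudo-Anosov. Separation at the final time $j=n-1$ alone does not suffice: the long curve $w_{n-1}^{-1}(a)$ lives in the compact surface $S$, so it may run through a fixed small ball in many nearly parallel strands (for instance after large powers of a Dehn twist), and points that are $>\,\mathrm{diam}(\mathcal{A})$-apart at time $n-1$ number only $O(1)$. Thus one is forced to use the \emph{full} itinerary over all times $0\le j\le n-1$ and to argue that distinct strands are pulled apart by some earlier $w_j^{-1}$. For $\mathbb{P}$-a.e.\ $\omega$ the sequence $(\omega_n)$ is genuinely hyperbolic --- it tracks a Teichm\"uller geodesic whose forward and backward endpoints lie in $\mathcal{UE}(S)$, so the images $\omega_n^{-1}(\alpha)$ equidistribute toward a uniquely ergodic lamination and cannot concentrate in a sublinear portion of the curve --- and this is the property that should furnish the separation and replace the uniform expansion available for pseudo-Anosovs. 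Quantifying this spreading uniformly in $n$, and in a way that survives the infimum over the possibly wild representatives $\bold{w}$ (which is legitimate because the driving quantity $l_\rho(\omega_n^{-1}\alpha)$ is a homotopy invariant), is the delicate point on which the proof rests.
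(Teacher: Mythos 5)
There is a genuine gap, and you have named it yourself: the construction of a separated set $E_n$ with $|E_n|\gtrsim \lambda^{n}$ --- equivalently, the claim that distinct nearly parallel strands of the long curve $w_{n-1}^{-1}(a)$ are pulled apart at some earlier time $j$ --- is the entire content of the lemma, and your proposal leaves it unproved (``the delicate point on which the proof rests''). Worse, the route you sketch for filling it is both unsubstantiated and misdirected. Equidistribution of $\omega_n^{-1}(\alpha)$ toward a uniquely ergodic lamination, even granting it, does not obviously produce a separation count with exponent $\log\lambda$ uniformly in $n$; making that quantitative would be harder than the whole inequality, which in this paper is the \emph{easy} direction (the heavy machinery --- semi-Markov partitions, Tiozzo's sublinear tracking --- is reserved for $h(\omega)\leq L$). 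Your appeal to homotopy invariance to handle the infimum over representatives also does not work as stated: $l_\rho(\omega_n^{-1}\alpha)$ is representative-independent, but your separation itineraries are defined through the specific maps $w_j^{-1}$ and are not homotopy invariant, so you would need the separated-set construction to go through for \emph{every} representative $\bold{w}$, including wild ones.

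The paper's proof sidesteps the parallel-strands obstruction entirely by working in the universal cover rather than in $S$. One fixes a hyperbolic metric $\rho$, lifts $\widetilde w_n$ of the $w_n$ (normalized by a basepoint), and proves the analogue of \cite[Lemma 10.8]{FLP} with $\varphi^{n}$ replaced by $w_n^{-1}$: for any $x,y\in\mathbb{H}^{2}$, $\lim_{n}\frac{1}{n}\log d_\rho(\widetilde w_n^{-1}x,\widetilde w_n^{-1}y)\leq h(\bold{w})$ (Lemma \ref{lem.10.8}). The FLP chain argument for this bound is deterministic and goes through verbatim; in $\mathbb{H}^{2}$ strands that wrap repeatedly through one small ball of $S$ have lifts that are genuinely far apart, so no separated set downstairs is ever needed. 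Taking $x,y$ to be the endpoints of (a fundamental segment of) a lift of the geodesic representative of a simple closed curve $\alpha$, one gets $l_\rho(w_n^{-1}\alpha)\leq d_\rho(\widetilde w_n^{-1}x,\widetilde w_n^{-1}y)$, and Theorem \ref{thm.Kar} --- the only probabilistic input --- gives $\log\lambda\leq h(\bold{w})$ for every representative $\bold{w}$, hence $\log\lambda\leq h(\omega)$. I recommend you abandon the separated-set/equidistribution strategy and adopt the universal-cover displacement bound.
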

\begin{proof}
We first fix a hyperbolic metric $\rho$ on $S$, a universal covering $\pi:\mathbb{H}^{2}\rightarrow S$ and a representative $\bold{w} = (w_{n})$ of $\omega$.
We also fix $p\in S$ and $\tilde{p}\in \pi^{-1}(p)$ in order to choose lifts $\widetilde w_{n}$ of $w_{n}$ uniquely for all $n\in\mathbb{Z}$.
In \cite{FLP}, a pseudo-Anosov diffeomorphism $\varphi:S\rightarrow S$ is discussed.
One can prove the following lemma by exchanging $\varphi^{n}$ with $w_{n}^{-1}$, and following the same argument as in \cite{FLP}.
\begin{lem}[c.f.{\cite[Lemma 10.8]{FLP}}]\label{lem.10.8}
For any $x,y\in\mathbb{H}^{2}$ 

$$\lim_{n\rightarrow\infty}\frac{1}{n}\log d_{\rho}(\widetilde w_{n}^{-1}x,\widetilde w_{n}^{-1}y)\leq h(\bold{w}).$$
\end{lem}
We may choose $x$ and $y$ in Lemma \ref{lem.10.8} to be the endpoints of a lift of geodesic representative of a simple closed curve $\alpha$ on $S$.
Then since $l_{\rho}(w_{n}^{-1}\alpha)\leq d_{\rho}(\widetilde w_{n}^{-1}x,\widetilde w_{n}^{-1}y)$,  we have $\log\lambda\leq h(\bold{w})$ for any representative $\bold{w}$ of $\omega$.
\end{proof}
In order to prove $h(\omega)\leq L(=\log\lambda)$, we need a notion of random subshift of finite type.

\subsection{Random subshift of finite type}\label{sec.random subshift}
We define a random subshift of finite type which we use to prove Theorem \ref{thm.main}.
Our goal is to associate a random subshift of finite type to a sample path $\omega\in\mathrm{MCG}(S)^{\mathbb{Z}}$.
Since we have to overcome certain difficulty which is described briefly in the introduction,
we need to slightly modify the definition from the standard one (see e.g. \cite[Definition 3.9]{GK} for the standard one).
The main difference is that we can only associate a random subshift of finite type to a representative $\bold{w}$ of $\omega\in\mathrm{MCG}^{\mathbb{Z}}(S)$.
For later convenience, we use the notations with $\bold{w}$ here.
\begin{defi}
Let $k(\bold{w},\cdot):\mathbb{Z}\rightarrow\mathbb{Z}$ be a function.
Suppose we have a family of $k(\bold{w},n)\times k(\bold{w},n+1)$ matrices $A(\bold{w},n)$ each of whose entry is $0$ or $1$. 
For any $n\in\mathbb{Z}$, let 
$$\Sigma_{k}(\bold{w},n) := \prod_{i\in\mathbb{Z}}\{1,2,\cdots, k(\bold{w},i+n)\}.$$
We define the coordinate so that for each element $(x_{i})\in\Sigma_{k}(\bold{w},n)$,
we have $x_{i}\in \{1,\cdots, k(\bold{w},i+n)\}$.
A random subshift of finite type is a pair $(\{\Sigma_{A}(\bold{w},n)\}_{n\in\mathbb{Z}},\sigma)$ where
$$ \Sigma_{A}(\bold{w},n) := \{ x = (x_{i})\in \Sigma_{k}(\bold{w}) \mid A(\bold{w},i+n)_{x_{i}, x_{i+1}} = 1 \text{ for all } 
i\in\mathbb{Z}\}$$
and $\sigma:\Sigma_{A}(\bold{w},n)\rightarrow\Sigma_{A}(\bold{w},n+1)$ is the standard left shift.
\end{defi}
We consider the discrete topology on each $\{1,\dots,k(\bold{w},n)\}$ and the product topology on $\Sigma(\bold{w},n)$.
Let $y_{i}\in k(\bold{w},i+n)$ for $s\leq i \leq t$.
The {\em $(s,t)_{n}$-cylinder set} in $\Sigma_{A}(\bold{w},n)$ of $(y_{i})$ is
$$\{x = (x_{i})\in\Sigma_{A}(\bold{w},n)\mid x_{i} = y_{i}, \text{ for all }s\leq i \leq t\}.$$
Let $\mathcal{C}_{n}(s,t)$ denote the family of $(s,t)_{n}$-cylinder sets in $\Sigma_{A}(\bold{w},n)$.
Note that $\sigma(\mathcal{C}_{n}(s,t)) = \mathcal{C}_{n+1}(s-1,t-1)$.

%
%
%
%
%
%
%

\section{Construction of semi-Markov partitions}\label{sec.Markov}

\subsection{Birectangle partition}

A semi-Markov partition with respect to $\omega\in\mathrm{MCG}(S)^{\mathbb{Z}}$ is 
a sequence of partitions of the surface $S$ by {\em birectangles} with certain condition so that it respects the dynamics of $\omega$ (see \S\ref{subsec.Markov}).
We first construct a birectangle decomposition, denoted $\mathcal{R}(F_{+}, F_{-}, \mathcal{X})$, from two transverse uniquely ergodic foliations $F_\pm\in \mathcal{UE}(S)$, and a marked Riemann surface $f:S\rightarrow\mathcal{X}$ which represents a point $X$ on the Teichm\"uller geodesic $\Gamma(F_{+}, F_{-})$.
The Riemann surface structure $\mathcal{X}$ lets us fix measured foliation representatives $(\mathcal{F}_{+},\mu_{+})$ and $(\mathcal{F}_{-},\mu_{-})$ of $F_{+}$ and $F_{-}$ respectively so that 
$(\mathcal{F}_{+},\mu_{+})$ and $(\mathcal{F}_{-},\mu_{-})$ are the horizontal and vertical foliation of a holomorphic quadratic differential on $\mathcal{X}$ of norm 1.
Their preimages by $f$ on $S$ are also denoted by the same notations.
Let $\mathrm{Sing}(\mathcal{F})$ denote the set of singular points of $\mathcal{F}$.
Note that with these representations, $\mathrm{Sing}(\mathcal{F}_{+}) = \mathrm{Sing}(\mathcal{F}_{-})$.
\begin{defi}
A subset $R\subset S$ is called an {\em $(\mathcal{F}_{+},\mathcal{F}_{-})$-rectangle}, or a {\em birectangle} if
$R$ is the image of some continuous map $\varphi:[0,1]\times[0,1]\rightarrow S$ such that 
\begin{itemize}
\item $\varphi|_{(0,1)\times (0,1)}$ is an embedding, and
\item for all $t\in[0,1]$, $\varphi([0,1]\times\{t\})$(resp.  $\varphi(\{t\}\times[0,1])$) is a finite union of leaves and singularities of $\mathcal{F}_{+}$
 (resp. $\mathcal{F}_{-}$), and in fact in one leaf if $t\in(0,1)$.
\end{itemize}
We let $\mathrm{int}(R) := \varphi((0,1)\times(0,1))$, 
$\partial_{h}R:=\varphi([0,1]\times \{0,1\})$,
$\partial_{v}R:=\varphi(\{0,1\}\times [0,1])$, and
$\partial R:=\partial_{h}R\cup \partial_{v}R$.

A family of birectangles $\mathcal{R} = \{R_{i}\}$ is called a {\em birectangle partition} if
\begin{enumerate}
\item $\bigcup_{i} R_{i} = S$, and
\item $\mathrm{int}(R_{i})\cap\mathrm{int}(R_{j}) = \emptyset$ for $i\not=j$.
\end{enumerate}
\end{defi}

For a singular measured foliation, we call a leaf which departs from a singularity a {\em singular leaf}.
Any small neighborhood of a singular point is decomposed into several components by singular leaves.
We call each component a {\em sector}.
A {\em saddle connection} is a singular leaf which connects two singular points.

We will now construct a birectangle partition of $S$.
We imitate the construction in \cite[Expos\'e  9]{FLP}.
For each sector of $\mathcal{F}_{-}$ of a singular point, we take a subarc of the singular leaf of $\mathcal{F}_{+}$ in the sector, which starts from the singular point
and have $\mu_{-}$ measure 1.
If $\mathcal{F}_{+}$ has a saddle connection and we can not take a singular leaf of $\mu_{-}$ measure 1, we instead take the whole saddle connection.
Let $\tau' = \tau'(F_{+},F_{-},\mathcal{X})\subset \mathcal{F}_{+}$ denote the family of such subarcs and saddle connections.
Then for each singular leaf of $\mathcal{F}_{-}$, 
we take the shortest subarc that starts from a singular point and intersects every element of $\tau'$ which is not a saddle connection at least once.
Similarly to before, we take whole saddle connections if there are no such subarcs.
Let $\eta'=\eta'(F_{+},F_{-},\mathcal{X})$ denote the family of such subarcs and saddle connections.
Then, for each $\alpha'\in\tau'$, we truncate the component of $\alpha'\setminus\eta'$ which contains $\partial\tau'\setminus \mathrm{Sing}(\mathcal{F}_{+})$ from $\alpha'$, and denote by $\alpha$ the resulting arc.
Note that saddle connections remain unchanged.
Let $\tau = \tau(F_{+},F_{-},\mathcal{X}) := \{\alpha\mid \alpha'\in\tau'\}$.
Then we extend each element of $\eta'$ until it meets $\tau$ exactly once more.
Let $\eta = \eta(F_{+},F_{-},\mathcal{X})$ denote the family of resulting subarcs.
Then we let $$\mathcal{R}(F_{+}, F_{-}, \mathcal{X}):= \{\overline{C}\mid C \text{ is a component of } S\setminus(\tau\cup\eta)\}.$$
\begin{lem}
$\mathcal{R}(F_{+}, F_{-}, \mathcal{X})$ is a birectangle partition.
\end{lem}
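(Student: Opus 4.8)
The plan is to imitate the construction of a Markov partition for a pseudo-Anosov map in \cite[Expos\'e 9]{FLP}, verifying that each step survives for the transverse foliations $(\mathcal{F}_{+},\mathcal{F}_{-})$ of the quadratic differential attached to $X$. Set $G := \tau\cup\eta$. Since the quadratic differential has only finitely many singularities, each with finitely many sectors, the families $\tau'$ and $\eta'$ are finite, and the truncation and extension operations preserve finiteness; hence $G$ is a finite embedded graph whose edges are subarcs of leaves of $\mathcal{F}_{+}$ (those coming from $\tau$) or of $\mathcal{F}_{-}$ (those coming from $\eta$), and whose vertices are the singular points together with the transverse intersection points of $\tau$ with $\eta$. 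Both partition axioms and the birectangle property will be read off from the combinatorics of $G$, so the whole argument reduces to understanding the components of $S\setminus G$.

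First I would show that every component $C$ of $S\setminus G$ is an open topological disk. Because $F_{\pm}\in\mathcal{UE}(S)$ are uniquely ergodic, the supporting foliations are minimal away from their singular leaves, so neither has a closed leaf; together with the hypothesis that $F_{+}$ and $F_{-}$ fill $S$, this prevents any essential simple closed curve from lying in a complementary region. The normalization that each arc of $\tau'$ has $\mu_{-}$-measure $1$, and the rule that each arc of $\eta'$ is the \emph{shortest} subarc meeting every element of $\tau'$, together make $G$ a net fine enough that no component of $S\setminus G$ contains an overly long horizontal or vertical leaf segment: every leaf segment of $\mathcal{F}_{+}$ of $\mu_{-}$-length exceeding $1$ must cross $\eta$, and symmetrically for $\mathcal{F}_{-}$ and $\tau$. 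Hence each $C$ is bounded in both flat directions and, being the complement of a finite graph in a surface admitting no essential curve inside $C$, is a disk.

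The crux, and the step I expect to be the main obstacle, is the analysis of $\partial C$: I must show that traversing the boundary of each disk one meets exactly two maximal horizontal arcs, lying in leaves of $\mathcal{F}_{+}$ (hence in $\tau$), alternating with exactly two maximal vertical arcs, lying in leaves of $\mathcal{F}_{-}$ (hence in $\eta$), the four corners being transverse intersections of $\tau$ with $\eta$ or singular points. This is precisely what the four construction steps are engineered to produce: passing from $\tau'$ to $\tau$ truncates the free ends of the horizontal arcs so that they terminate on $\eta$, while passing from $\eta'$ to $\eta$ lengthens the vertical arcs ``until they meet $\tau$ exactly once more'' so that the vertical arcs terminate on $\tau$. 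Transversality of $\mathcal{F}_{+}$ and $\mathcal{F}_{-}$ away from singularities rules out tangential corners and spikes, giving the alternating horizontal--vertical--horizontal--vertical pattern; the regions incident to a singular point are accommodated by the clause in the definition of a birectangle that allows a side to be a finite union of leaves and singularities. The delicate part is a careful case analysis, sector by sector at each singularity and along the seams where the truncation and extension meet, to confirm that no degenerate or higher-order corner arises, exactly as in \cite[Expos\'e 9]{FLP}.

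With the disk-and-rectangle structure established, I would construct the parametrizing map $\varphi\colon[0,1]\times[0,1]\to S$ using the natural flat coordinates $z=x+iy$ of the quadratic differential, in which the leaves of $\mathcal{F}_{+}$ and $\mathcal{F}_{-}$ are the horizontal and vertical Euclidean segments and $\mu_{+},\mu_{-}$ are the associated transverse measures. These coordinates identify $\overline{C}$ with a Euclidean rectangle whose horizontal sides are the $\tau$-arcs and whose vertical sides are the $\eta$-arcs of $\partial C$; precomposing the identification with an affine rescaling of $[0,1]^{2}$ gives $\varphi$, which restricts to an embedding on $(0,1)\times(0,1)$ and satisfies the required leaf conditions on each side, a single leaf segment in the interior of a side and a finite union of leaf segments and singularities at the corners. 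Finally the two partition axioms follow: $\bigcup_{i}R_{i}=\overline{S\setminus G}=S$ since the finite graph $G$ has empty interior, and $\mathrm{int}(R_{i})\cap\mathrm{int}(R_{j})=\emptyset$ for $i\neq j$ because $\mathrm{int}(\overline{C})=C$ for each closed disk $\overline{C}$ with boundary contained in $G$.
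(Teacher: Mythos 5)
Your overall route---imitating Expos\'e 9 of \cite{FLP} and reading the partition off the complement of the finite graph $\tau\cup\eta$---is the same as the paper's, but two of your supporting claims fail and the decisive step is deferred rather than proved. First, the assertion that every leaf segment of $\mathcal{F}_{+}$ of $\mu_{-}$-length exceeding $1$ must cross $\eta$ (and its symmetric counterpart) is unjustified: the normalization that the arcs of $\tau'$ have $\mu_{-}$-measure $1$ controls the length of those arcs, not the first-return time of horizontal leaves to the finite vertical transversal $\eta$, and no such uniform bound is available---indeed the paper later needs a separate, non-trivial compactness argument (Lemma \ref{lem.U}) merely to bound the vertical lengths of the birectangles, and proves no quantitative bound inside this lemma at all. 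Second, the inference ``$C$ contains no essential simple closed curve in $S$, hence $C$ is a disk'' is false as stated: an annular complementary component whose core bounds a disk in $S$ containing part of the graph carries only curves inessential in $S$, yet is not a disk. What is actually needed is that no boundary loop of a complementary region can consist entirely of leaves of one foliation, and this is precisely how the paper argues: if $\partial R\cap\tau=\emptyset$, then $\partial R$ would contain a loop made of leaves of $\mathcal{F}_{-}$, which cannot exist because $\mathcal{F}_{-}$ is uniquely ergodic.

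More seriously, the step you yourself identify as ``the main obstacle''---that each region is four-sided, with two sides on $\tau$ alternating with two sides on $\eta$ and no degenerate corners---is exactly the content of the lemma, and your proposal does not establish it; it is deferred to ``a careful case analysis \dots exactly as in \cite{FLP}''. The paper short-circuits that case analysis with a direct observation: since by construction every singularity lies on $\tau\cup\eta$, a region $R$ with $\partial R$ meeting $\tau$ contains no singular point in its interior, so the singular Euclidean structure determined by $\mathcal{F}_{\pm}$ is nonsingular there and forces the two components of $\partial R\cap\eta$ to be parallel, whence $R$ is a birectangle. Without either this observation or the sector-by-sector analysis actually carried out, your write-up verifies the easy partition axioms but leaves the birectangle property itself---the whole point of the lemma---unproved.
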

\begin{proof}
It suffices to prove that each element of $R\in\mathcal{R}(F_{+}, F_{-}, \mathcal{X})$ is a birectangle.
If $\partial R$ contains $\tau$, then by construction $R$ does not contain singular points in the interior.
By the singular Euclidean structure determined by $\mathcal{F}_{\pm}$, we see that two components of $\partial R\cap \eta$ are parallel and in particular 
$R$ is a birectangle.
If there were $R$ with $\partial R \cap\tau = \emptyset$, then $\partial R$ must have contained a loop consisting of leaves of $\mathcal{F}_{-}$.
However, since $\mathcal{F}_{-}$ is uniquely ergodic, there are no such loops.
\end{proof}
%
%

%
\subsection{Semi-Markov partition}\label{subsec.Markov}
Let $\omega\in\mathrm{MCG}(S)^{\mathbb{Z}}$ and $\bold{w} = (w_{n})$ be a representative of $\omega$. 
Our goal in this subsection is to construct a semi-Markov partition from a birectangle partition obtained in the previous subsection, so that
it respects the dynamics of $\bold{w}$.
\begin{defi}\label{defi.semi-Markov}
A sequence of birectangle partitions $\{\mathcal{R}_{n}\}_{n\in\mathbb{Z}}$ is a {\em semi-Markov partition} with respect to $\bold{w}$ if for every $n\in\mathbb{Z}$,
\begin{enumerate}
\item[(M1)] $w_{n}\partial_h \mathcal{R}_{n} \subset w_{n+1}\partial_{h} \mathcal{R}_{n+1}$,  $w_{n}\partial_{v} \mathcal{R}_{n} \supset w_{n+1}\partial_{v} \mathcal{R}_{n+1}$, and 
\item[(M2)] for each $R_{n}\in\mathcal{R}_{n}$ and $R_{n+ 1}\in\mathcal{R}_{n+ 1}$, if $w_{n} R_{n}$ and $w_{n+1}R_{n+ 1}$ intersects, then 
the intersection is a single birectangle.
\end{enumerate}
\end{defi}
We call it a {\em semi}-Markov partition because to have $h(\omega)\leq L$, we further need estimates for the size of birectangles.
We here carefully construct a semi-Markov partition so that it further satisfies certain estimates which we give in \S \ref{sec.main}.

Recall that a Markov partition for a pseudo-Anosov diffeomorphism $\varphi$ is constructed by using stable and unstable foliations $F_{s}$ and $F_{u}$ of $\varphi$.
In terms of the Thurston compactification $\bar{\mathcal{T}}(S)$, 
these foliations are characterized as limits 
$$\lim_{n\rightarrow\infty} \varphi^{n}X = F_{s} \text{ and }
\lim_{n\rightarrow\infty} \varphi^{-n}X = F_{u}$$
 where $X\in\mathcal{T}(S)$ is an arbitrary point.
Kaimanovich-Masur proved that for the case of random walks, we have similar limits.
\begin{thm}[{\cite[Theorem 2.2.4]{KM}}]\label{thm.KM}
Let $\mu$ be a probability measure which satisfies Condition \ref{cond}.
 Then
\begin{enumerate}
 \item  There exists a unique $\mu$-stationary probability measure $\nu$
	on $\mathcal{PMF}(S)$ 
	which is purely non-atomic and concentrated on $\mathcal{UE}(S)$.
\item  For $\mathbb{P}$-a.e. $\omega\in\mathrm{MCG}(S)^{\mathbb{Z}_{+}}$ and any $X\in\mathcal{T}(S)$,
       the sequence $\omega_nX$ converges in $\mathcal{PMF}(S)$ to a limit 
       $F(\omega)\in\mathcal{UE}(S)$ 
       and the distribution of the limits is given by the measure $\nu$.
\end{enumerate}
\end{thm}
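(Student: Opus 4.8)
The plan is to run the standard boundary-theory machinery for a group acting on a compact space, here the action of $\mathrm{MCG}(S)$ on the Thurston compactification $\bar{\mathcal{T}}(S) = \mathcal{T}(S)\cup\mathcal{PMF}(S)$, which is compact, metrizable, and acted on continuously. The convolution $\nu\mapsto \mu*\nu := \sum_{g}\mu(g)\,g_*\nu$ is a continuous affine self-map of the weak-$*$ compact convex set of Borel probability measures on $\bar{\mathcal{T}}(S)$, so the Schauder--Tychonoff fixed point theorem (or a Krylov--Bogolyubov averaging of $\tfrac1N\sum_{n<N}\mu^{*n}*\nu_0$) produces a $\mu$-stationary measure $\nu$. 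Non-atomicity of any such $\nu$ is a direct consequence of non-elementariness: if $m:=\max_x\nu(\{x\})>0$ and $A:=\{x\in\bar{\mathcal{T}}(S):\nu(\{x\})=m\}$, then $A$ is finite and nonempty, and stationarity $\nu(\{x\}) = \sum_g\mu(g)\nu(\{g^{-1}x\})$ forces $g^{-1}A = A$ for every $g\in\mathrm{supp}(\mu)$, hence a finite orbit for the generated subgroup; but a non-elementary subgroup has no finite orbit in $\bar{\mathcal{T}}(S)$, a contradiction.

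For the convergence statement I would use a measure-valued martingale. Writing $\omega_n = g_1\cdots g_n$ with i.i.d.\ increments $g_i$ of law $\mu$ and $\mathcal{G}_n$ the $\sigma$-algebra they generate, stationarity gives $\mathbb{E}[(\omega_{n+1})_*\nu\mid\mathcal{G}_n] = (\omega_n)_*(\mu*\nu) = (\omega_n)_*\nu$, so $\{(\omega_n)_*\nu\}_n$ is a bounded martingale in the compact space of probability measures on $\bar{\mathcal{T}}(S)$ and converges weakly, for $\mathbb{P}$-a.e.\ $\omega$, to a limit $\nu_\omega$. The crux, and the step I expect to be hardest, is to prove that $\nu_\omega$ is a single Dirac mass $\delta_{F(\omega)}$ with $F(\omega)\in\mathcal{UE}(S)$, equivalently that a.e.\ the maps $\omega_n$ contract $\bar{\mathcal{T}}(S)$ onto one uniquely ergodic foliation. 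I would extract this from three ingredients: (i) each pseudo-Anosov acts on $\mathcal{PMF}(S)$ with north--south dynamics, its attracting fixed foliation being uniquely ergodic; (ii) the non-elementary hypothesis guarantees that a.e.\ sample path performs, infinitely often, long words behaving like high powers of pseudo-Anosovs with moving fixed points, so the diameter of the image of any fixed compact set shrinks; and (iii) Masur's criterion, which identifies the limiting foliation as uniquely ergodic once the corresponding Teichm\"uller geodesic is recurrent to the thick part, a recurrence that stationarity of $\nu$ together with the non-elementary dynamics supplies.

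Granting the contraction, the remaining conclusions follow formally. Point convergence $\omega_n X\to F(\omega)$ for any fixed $X\in\mathcal{T}(S)$ is immediate from the same contraction of $\bar{\mathcal{T}}(S)$. Since $\nu$ is non-atomic it does not charge the (at most one-point) exceptional repelling configuration, so $(\omega_n)_*\nu\to\delta_{F(\omega)}$; taking barycenters of this martingale limit, $\int\delta_{F(\omega)}\,d\mathbb{P}(\omega) = \lim_n\mathbb{E}[(\omega_n)_*\nu] = \nu$, which simultaneously shows that the distribution of the limits $F(\omega)$ is $\nu$ and that $\nu(\mathcal{UE}(S)) = 1$. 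Uniqueness is then automatic: any stationary $\nu'$ (itself non-atomic by the argument above) yields its own convergent martingale $(\omega_n)_*\nu'\to\delta_{F(\omega)}$ with the \emph{same} boundary point $F(\omega)$, since the attracting foliation is determined by the path and not by the initial measure, whence $\nu' = \int\delta_{F(\omega)}\,d\mathbb{P} = \nu$.

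The genuine difficulty is isolated in steps (ii)--(iii) above. In the hyperbolic or semisimple settings the boundary action is proximal and the contraction is essentially automatic, but on $\mathcal{PMF}(S)$ the non-uniquely-ergodic foliations form a nontrivial obstruction to proximality, so one cannot merely invoke a general proximality theorem. The real work is to show that a.e.\ trajectory escapes to infinity in $\mathcal{T}(S)$ along a Teichm\"uller-type geodesic recurring to the thick part, and to convert that recurrence, via Masur's theorem, into unique ergodicity of the limit; controlling the foliation neighborhoods uniformly along almost every path is the technical heart of the Kaimanovich--Masur argument.
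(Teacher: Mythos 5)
You should be aware at the outset that the paper contains no proof of this statement: Theorem \ref{thm.KM} is imported verbatim from \cite[Theorem 2.2.4]{KM}, so the only fair benchmark for your proposal is Kaimanovich--Masur's original argument. Measured against that, the formal layers of your sketch are correct and do match the skeleton of \cite{KM}: existence of a stationary measure by Krylov--Bogolyubov averaging (though you should average on the compact invariant space $\mathcal{PMF}(S)$ rather than on $\bar{\mathcal{T}}(S)$, where you would additionally have to rule out stationary mass sitting on the interior $\mathcal{T}(S)$); non-atomicity via the maximal-atom argument together with the fact that a non-elementary subgroup has no finite orbit in $\mathcal{PMF}(S)$ (a power of a pseudo-Anosov fixes only its two fixed foliations, and non-elementarity provides two pseudo-Anosovs with disjoint fixed sets); the measure-valued martingale $(\omega_n)_*\nu\rightarrow\nu_\omega$; and, \emph{granting} $\nu_\omega=\delta_{F(\omega)}$ with $F(\omega)\in\mathcal{UE}(S)$, the barycenter identity yielding the distribution statement, $\nu(\mathcal{UE}(S))=1$, and uniqueness.

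The genuine gap is exactly where you concede it: steps (ii)--(iii) are heuristics, and the mechanisms you name would not survive being made precise. The claim that a.e.\ path ``performs long words behaving like high powers of pseudo-Anosovs with moving fixed points, so the diameter of the image of any fixed compact set shrinks'' is the Furstenberg proximality heuristic, and, as you yourself observe, the action on $\mathcal{PMF}(S)$ is not proximal: near the non-uniquely-ergodic locus there is no uniform contraction, so any soft shrinking argument presupposes the contraction it is meant to establish. Likewise ``a recurrence that stationarity of $\nu$ together with the non-elementary dynamics supplies'' is an assertion, not an argument: that a.e.\ sample path is tracked by a Teichm\"uller geodesic recurring to the thick part is a hard theorem in its own right, and results of that type (such as Theorem \ref{thm.Tio}, used elsewhere in this paper) postdate \cite{KM} and themselves rest on Theorem \ref{thm.KM}, so invoking them here risks circularity. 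Kaimanovich--Masur instead do the work directly: their \S 1.3--1.4 establishes the hyperbolic-like convergence behavior of sequences in $\mathcal{T}(S)$ toward \emph{uniquely ergodic} boundary points (this is what replaces proximality, and is also what makes your ``point convergence is immediate'' and measure-independent uniqueness steps legitimate), and they dispose of the non-uniquely-ergodic locus by a separate stationarity argument exploiting the structure of transverse measures on non-uniquely-ergodic minimal foliations, rather than by running Masur's recurrence criterion along the walk. Until rigorous substitutes for these two steps are supplied, everything in your third paragraph is conditional, so the proposal stands as a faithful outline of the known proof rather than a proof.
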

We may apply Theorem \ref{thm.KM} both to $\mu$ and $\check\mu$.
We denote by $\check\nu$ the $\check\mu$-stationary measure on $\mathcal{PMF}(S)$.
For $\mathbb{P}$-a.e $\omega\in\mathrm{MCG}(S)^{\mathbb{Z}}$, let 
$$F_{+}(\omega):= \lim_{n\rightarrow+\infty}\omega_{n}X \text{, and } F_{-}(\omega):= \lim_{n\rightarrow-\infty}\omega_{n}X.$$
Let $\Gamma(\omega)$ denote the Teichm\"uller geodesic $\Gamma(F_{+}(\omega), F_{-}(\omega))$.
Note that by the definition of the Bernoulli shift $\theta$, we have $F_{+}(\theta^{n}\omega) = \omega_{n}^{-1}F_{+}(\omega)$ and
$F_{-}(\theta^{n}\omega) = \omega_{n}^{-1}F_{-}(\omega)$, and hence
$\Gamma(\theta^{n}\omega) = \omega_{n}^{-1}\Gamma(\omega)$.
We first fix $X_{0}$ on $\Gamma(\omega)$.
By \cite[Lemma 1.4.3]{KM}, the function $D:\mathcal{PMF}(S)\times \mathcal{PMF}(S)\rightarrow \mathbb{R}$, 
$(G_{+},G_{-})\mapsto d_{\mathcal{T}}(X_{0}, \Gamma(G_{+},G_{-}))$ is continuous where it is defined.
We fix open neighborhoods $U_{+}$ of $F_{+}(\omega)$ and $U_{-}$ of $F_{-}(\omega)$ with the following condition.
\begin{condi}\label{condi.U}
The neighborhoods $U_{\pm}$ satisfy
\begin{itemize}
\item $U_{+}$ and $U_{-}$ have positive $\nu$ and $\check\nu$ measure respectively, and
\item  for any $G_{+}\in U_{+}$ and $G_{-}\in U_{-}$, there is the Teichm\"uller geodesic $\Gamma[G_{+},G_{-}]$.
\item  $D(U_{+}, U_{-})$ is bounded from above by some constant $C>0$.
\end{itemize}
\end{condi}
The construction of semi-Markov partition in this section works for any $U_{\pm}$ satisfying Condition \ref{condi.U}.
We give $U_{\pm}$ which satisfy further condition that we need to prove Theorem \ref{thm.main} in \S \ref{sec.refine}.
Finally let $\delta>0$ be small enough so that the $C$-neighborhood of $X_{0}$ is contained in the $\delta$-thick part of $\mathcal{T}(S)$.

We now choose points in $\mathcal{T}(S)$ to construct a semi-Markov partition.
See Figure \ref{fig.position} for a schematic picture.
First, we choose $X'_{n}$ to be a closest point to $X_{0}$ on $\Gamma(\theta^{n}\omega)$.
For $n$ positive, we define $X''_{n}\in\Gamma(\theta^{n}\omega)$ inductively by 
\begin{subnumcases}
{X''_{n} := }\nonumber
X'_{n} & if $F_{+}(\theta^{n}\omega)\in U_{+}$ and $F_{-}(\theta^{n}\omega)\in U_{-}$\\
\omega_{n}^{-1}\omega_{n-1}X''_{n-1}, & otherwise.
\end{subnumcases}
We define $X''_{n}$ for negative $n$ similarly.
We then define $\varepsilon:\mathbb{Z}\rightarrow \{0,1\}$ as follows.
For positive $n$,
we set $\varepsilon(n) = 1$ if $d(X_{0}, \omega_{n}X''_{n})>d(X_{0},\omega_{i}X''_{i})$ for all $0\leq i<n$, and $\varepsilon(n) = 0$ for otherwise.
For negative $n$, $\varepsilon$ is defined similarly.
We set $\varepsilon(0) := 1$.
Then for $n$ positive, we define $X_{n}$ inductively
\begin{subnumcases}
{X_{n} := }\nonumber
X''_{n} & if $\varepsilon(n) = 1$\\\nonumber
\omega_{n}^{-1}\omega_{n-1}X_{n-1}, & if $\varepsilon(n) = 0$.
\end{subnumcases}
We define $X_{n}$ for negative $n$ similarly.
These $X_{n}$ are in the $\delta$-thick part and $\omega_{n}X_{n}$ are located according to the order of $n$ on $\Gamma(\omega)$. 
Even with this modification, the distance between $\omega_{n}X_{0}$ and $\omega_{n}X_{n}$ grows sublinearly.
\begin{figure}[htp]
\begin{center}
\includegraphics[scale = 0.5]{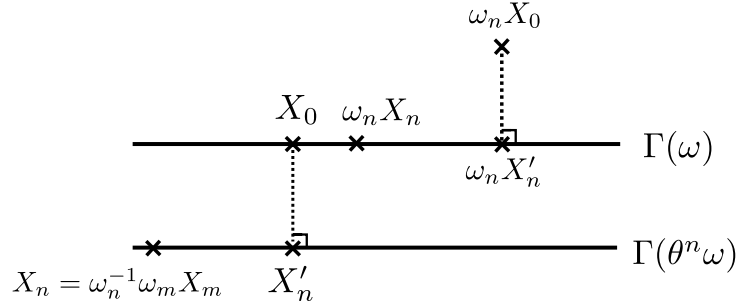}
\caption{Positions of $X_n$. The symbol of right angle is used to mean a closest point projection. 
Where $m:=\max\{k\in \mathbb{Z}_{+}\mid k<n\text{ and }\varepsilon(m) = 1\}$.}
\label{fig.position}
\end{center}
\end{figure}
\begin{lem}
\label{lem.Tio}
For above $\omega$ and $\{X_{n}\}$,  
$$\lim_{n\rightarrow\pm\infty}\frac{1}{n} d_{\mathcal{T}}(\omega_{n}X_{0}, \omega_{n}X_{n}) = 0.$$
\end{lem}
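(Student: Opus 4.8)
The plan is to compare both points to the geodesic $\Gamma(\omega)$, exploiting that the construction forces $\omega_n X_n$ to lie on $\Gamma(\omega)$. Since $\mathrm{MCG}(S)$ acts by isometries and $X_n\in\Gamma(\theta^n\omega)=\omega_n^{-1}\Gamma(\omega)$, the image $\omega_n X_n$ lies on $\Gamma(\omega)$; likewise $\omega_n X'_n=\pi_{\Gamma(\omega)}(\omega_n X_0)$ is the nearest-point projection of the orbit to $\Gamma(\omega)$. I would parametrize $\Gamma(\omega)$ by arc length with $X_0$ at $0$ and the direction of $F_+(\omega)$ positive, and let $s_n$ and $t_n$ denote the parameters of $\pi_{\Gamma(\omega)}(\omega_n X_0)$ and of $\omega_n X_n$. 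Since the portion of $\Gamma(\omega)$ between two of its points realizes their distance, $d_{\mathcal{T}}\big(\pi_{\Gamma(\omega)}(\omega_n X_0),\omega_n X_n\big)=|s_n-t_n|$, so by the triangle inequality
$$d_{\mathcal{T}}(\omega_n X_0,\omega_n X_n)\le d_{\mathcal{T}}(\omega_n X_0,\Gamma(\omega))+|s_n-t_n|,$$
and it suffices to show that both terms are $o(n)$.

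First I would control the orbit itself: by the sublinear tracking theorem of Tiozzo \cite{Tio}, for $\mathbb{P}$-a.e.\ $\omega$ the orbit tracks the ray $\gamma_+$ from $X_0$ to $F_+(\omega)$ sublinearly, i.e.\ $d_{\mathcal{T}}(\omega_n X_0,\gamma_+(Ln))=o(n)$, where $L$ is the drift. In particular the first term $d_{\mathcal{T}}(\omega_n X_0,\Gamma(\omega))$ is $o(n)$. Moreover $\gamma_+(Ln)$ has parameter $Ln$ and lies within $o(n)$ of $\omega_n X_0$, as does $\pi_{\Gamma(\omega)}(\omega_n X_0)$; comparing parameters along $\Gamma(\omega)$ then gives $s_n=Ln+o(n)$. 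It remains to prove $t_n=Ln+o(n)$.

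Call $n$ a \emph{good time} if $F_+(\theta^n\omega)\in U_+$ and $F_-(\theta^n\omega)\in U_-$, equivalently $\mathbf{1}_G(\theta^n\omega)=1$ where $\mathbf{1}_G:=\mathbf{1}[F_+\in U_+,\,F_-\in U_-]$. Under $\mathbb{P}$ the foliations $F_+$ and $F_-$ are functions of the positive and negative coordinates, hence independent, and are distributed as $\nu$ and $\check\nu$ by Theorem \ref{thm.KM}, so $\mathbb{P}(G)=\nu(U_+)\,\check\nu(U_-)>0$ by Condition \ref{condi.U}. Since $\theta$ is ergodic and measure preserving, Birkhoff's theorem shows that the good times have positive density $\rho:=\mathbb{P}(G)$ almost surely. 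At a good time $\omega_n X''_n=\omega_n X'_n=\pi_{\Gamma(\omega)}(\omega_n X_0)$ sits at parameter $s_n$, whereas between good times $\omega_n X''_n$ is constant; hence the record function $\varepsilon$ changes only at good times, and for all large $n$ one checks $t_n=\max\{s_i:\ i\le n,\ i\ \text{good}\}$ (these parameters being positive once $\omega_i X_0$ is near $F_+(\omega)$).

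Finally I would pin down $t_n$. For the upper bound, $|s_i|=d_{\mathcal{T}}(X_0,\pi_{\Gamma(\omega)}(\omega_i X_0))\le d_{\mathcal{T}}(X_0,\omega_i X_0)$ and $d_{\mathcal{T}}(X_0,\omega_i X_0)=Li+o(i)$, so $t_n\le\max_{i\le n}d_{\mathcal{T}}(X_0,\omega_i X_0)=Ln+o(n)$. For the lower bound, let $n'$ be the largest good time $\le n$; then $t_n\ge s_{n'}=Ln'+o(n')$, and the hard part is to see $n-n'=o(n)$. This is exactly where the construction pays off: if $n-n'\ge\beta n$ for some $\beta>0$ along a subsequence, the interval $(n',n]$ contains no good time, so the good-time count up to $n$ equals that up to $n'\le(1-\beta)n$, forcing the density to be at most $(1-\beta)\rho<\rho$, contradicting Birkhoff. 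Hence $t_n=Ln+o(n)$, so $|s_n-t_n|=o(n)$ and the $n\to+\infty$ case follows; the $n\to-\infty$ case is identical after replacing $\mu,\nu,U_+$ by $\check\mu,\check\nu,U_-$. The main obstacle is thus the lower bound on $t_n$, namely controlling the gap to the most recent good time, which rests on the positive density of good times supplied by ergodicity.
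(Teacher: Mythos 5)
Your proposal is correct and follows essentially the same route as the paper's proof: Tiozzo's sublinear tracking theorem, positive density of the good times $\{n : F_{+}(\theta^{n}\omega)\in U_{+},\ F_{-}(\theta^{n}\omega)\in U_{-}\}$ via the ergodic theorem, sublinearity of the gap to the most recent good time, and an analysis of the record structure of $\varepsilon$ --- the paper merely phrases your comparison of the parameters $s_{n},t_{n}$ as the two estimates $d_{\mathcal{T}}(\Gamma(Ln),\omega_{n}X''_{n})=o(n)$ and $d_{\mathcal{T}}(\omega_{n}X''_{n},\omega_{n}X_{n})=o(n)$, and it obtains the gap bound $l_{n}(\omega)=o(n)$ from Kingman's subadditive theorem rather than your (equally valid, arguably cleaner) Birkhoff density contradiction. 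One harmless slip to repair: the step $|s_{i}|=d_{\mathcal{T}}\bigl(X_{0},\pi_{\Gamma(\omega)}(\omega_{i}X_{0})\bigr)\leq d_{\mathcal{T}}(X_{0},\omega_{i}X_{0})$ tacitly assumes the nearest-point projection does not increase distance from $X_{0}$, which is not available in $(\mathcal{T}(S),d_{\mathcal{T}})$ (the triangle inequality only gives a factor $2$), but your own estimate $s_{i}=Li+o(i)$ already yields the upper bound $t_{n}\leq Ln+o(n)$, so the argument goes through unchanged.
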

\begin{proof}
Since one can prove the statement for negative $n$ similarly, 
we assume that $n$ is positive. 
Then since $\nu$ and $\check\nu$ are independent, 
\begin{align*}
\mathbb{P}(\eta = (\eta_{n})\in\mathrm{MCG}(S)^{\mathbb{Z}}\mid F_{+}(\eta)\in U_{+} \text{ and } F_{-}(\eta)\in U_{-})
 = \nu(U_{+})\check\nu(U_{-})>0.
\end{align*}
Hence by the ergodic theorem $$\{n\in\mathbb{Z}: F_{+}(\theta^{n}\omega)\in U_{+} \text{ and }F_{-}(\theta^{n}\omega)\in U_{-}\}$$ has positive density.
We now recall the work of Tiozzo.
\begin{thm}[ {\cite[Theorem 18]{Tio}}]\label{thm.Tio}
For $\mathbb{P}$-a.e. $\omega\in\mathrm{MCG}(S)^{\mathbb{Z}}$, let $\Gamma$ denote the Teichm\"uller geodesic
 $\Gamma(\omega)$ with parametrization by arc length and $\Gamma(0) = X_{0}$.
 Then we have
$$\lim_{n\rightarrow\pm\infty}\frac{1}{n} d_{\mathcal{T}}(\omega_{n}X_{0}, \Gamma(Ln)) = 0$$
 where $L$ is the drift with respect to the Teichm\"uller metric.
\end{thm}

Let $$l_{n}(\omega):=\max\{k\mid \text{ for all } 0\leq n-k\leq i\leq n, F_{+}(\theta^{i}\omega)\not\in U_{+} \text{ or }F_{-}(\theta^{i}\omega)\not\in U_{-} \}.$$
Since $l_{n}(\omega)\leq n$,
$l_{n}$ is an integrable function.
Hence Kingman's subadditive ergodic theorem implies for $\mathbb{P}$-a.e. $\omega$,
$$\lim_{n\rightarrow\infty}l_{n}(\omega)/n = 0.$$
Then, we first estimate $d_{\mathcal{T}}(\Gamma(Ln),\omega_{n}X''_{n})$.
Let $\epsilon>0$.
By above observations, we may suppose that for large enough $n$,
we have $d_{\mathcal{T}}(\Gamma(Lm), \omega_{m}X'_{m})\leq m\epsilon\leq n\epsilon$ where $m:= n-l_{n}(\omega)$,  and 
$l_{n}(\omega)\leq n\epsilon$.
Then by definition, we have
\begin{align*}
d_{\mathcal{T}}(\Gamma(Ln),\omega_{n}X''_{n})
&\leq d_{\mathcal{T}}(\Gamma(Ln), \Gamma(Lm)) + d_{\mathcal{T}}(\Gamma(Lm), \omega_{m}X'_{m})\\
&\leq (L+1)n\epsilon 
\end{align*}
Hence we have $$\lim_{n\rightarrow\infty}d_{\mathcal{T}}(\Gamma(Ln),\omega_{n}X''_{n})/n = 0.$$
Now we estimate $d_{\mathcal{T}}(\omega_{n}X''_{n},\omega_{n}X_{n})$.
If $X''_{n}\not= X_{n}$, then there exists $m$ such that 
$d_{\mathcal{T}}(X_{0},\omega_{m}X''_{m})>d_{\mathcal{T}}(X_{0},\omega_{n}X''_{n})$.
Suppose $m$ is chosen to be maximum with this property so that $\omega_{m}X''_{m} = \omega_{n}X_{n}$.
Let $\epsilon>0$.
We may suppose that $n$ is large enough so that 
$d_{\mathcal{T}}(\Gamma(Lk),\omega_{k}X''_{k})\leq k\epsilon$ for $k\in\{n,m\}$.
Then we have two cases; $d_{\mathcal{T}}(X_{0},\Gamma(Ln))> d_{\mathcal{T}}(X_{0},\omega_{m}X''_{m})$
 or $d_{\mathcal{T}}(X_{0},\Gamma(Ln))\leq d_{\mathcal{T}}(X_{0},\omega_{m}X''_{m})$.
Since $Lm<Ln$, in both cases we have,
$$d_{\mathcal{T}}(\omega_{n}X''_{n}, \omega_{n}X_{n}) = d_{\mathcal{T}}(\omega_{n}X''_{n}, \omega_{m}X''_{m})
\leq (n+m)\epsilon\leq 2n\epsilon.$$
Hence we have $$\lim_{n\rightarrow\infty}d_{\mathcal{T}}(\omega_{n}X''_{n}, \omega_{n}X_{n})/n=0.$$
By Theorem \ref{thm.Tio} and the triangle inequality, we have the conclusion.

\end{proof}

Let $f:S\rightarrow \mathcal{X}_{0}$ be a representative of $X_{0}$.
Since each $\omega_{n}X_{n}$ is on $\Gamma(\omega)$,
there is the Teichm\"uller map $T_{n}$ that stretches $F_{+}(\omega)$ and contracts $F_{-}(\omega)$ such that
$f:S\rightarrow \mathcal{X}_{n}:= T_{n}(\mathcal{X}_{0})$ represents $\omega_{n}X_{n}$.
Then let $\mathcal{R}_{n}':=\mathcal{R}(F_{+}(\omega), F_{-}(\omega), \mathcal{X}_{n})$,
 $\tau_{n}:=\tau(F_{+}(\omega),F_{-}(\omega),\mathcal{X}_{n})$
and $\eta_{n}:=\eta(F_{+}(\omega),F_{-}(\omega),\mathcal{X}_{n})$.
We denote the corresponding measured foliation representatives of $F_{+}(\omega)$ and $F_{-}(\omega)$
 by $(\mathcal{F}_{+}(\omega,n),\mu_{+}(\omega,n))$ and $(\mathcal{F}_{-}(\omega,n),\mu_{-}(\omega,n))$
 respectively.
Since $\omega_{n}X_{n}$ are on $\Gamma(\omega)$, by the definition of $\varepsilon$, $\{w_{n}^{-1}\mathcal{R}'_{n}\}$ satisfies (M1).
We need to decompose each birectangles in $w_{n}^{-1}\mathcal{R}'_{n}$ further to have a partition which satisfies (M2).

Given two birectangle partitions $\mathcal{R},\mathcal{R}'$ with 
$\partial_{h}\mathcal{R}\subset \partial_{h}\mathcal{R}'$ and $\partial_{v}\mathcal{R}\supset\partial_{v}\mathcal{R}'$,
let $\mathcal{R}\vee\mathcal{R}'$ denote the birectangle partition 
we get by cutting $S$ by $\partial_{h}\mathcal{R}'\cup \partial_{v}\mathcal{R}$.
Let $0<i<j<k$ be indices which satisfy
\begin{enumerate}
\item $\varepsilon(i) = \varepsilon(j) = \varepsilon(k) = 1$, and
\item $\varepsilon(l)$ = 0 for all $i<l<j$ or $j< l<k$, 
\end{enumerate}
We define $\mathcal{R}_{j} := w_{j}^{-1}(\mathcal{R}'_{i} \vee \mathcal{R}'_{j} \vee \mathcal{R}'_{k})$.
We note that $\mathcal{R}'_{i} \vee \mathcal{R}'_{j} \vee \mathcal{R}'_{k}$
is equal to  
$$\{\bar{C}\mid C \text{ is a component of } S\setminus(w_{k}\tau_{k}\cup w_{i}\eta_{i})\}.$$
For $n > 0$ with $\varepsilon(n) = 0$, let $m$ be the largest integer which is less than $n$ and $\varepsilon(m) = 1$.
We define $\mathcal{R}_{n}:=w_{n}^{-1}w_{m}\mathcal{R}_{m}$.
For negative $n$, $\mathcal{R}_{n}$ is defined similarly.
By the construction, $\{\mathcal{R}_{n}\}$ still satisfies (M1).
\begin{lem}\label{lem.Markov}
$\{\mathcal{R}_{n}\}$ is a semi-Markov partition with respect to $\bold{w} = (w_{n})$.
\end{lem}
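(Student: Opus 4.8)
**The plan is to verify the two defining conditions (M1) and (M2) of Definition \ref{defi.semi-Markov} for the partition $\{\mathcal{R}_{n}\}$ constructed above.** Condition (M1) has essentially been arranged during the construction, so the main content of the lemma lies in condition (M2). I will therefore first dispose of (M1) with a short argument and then concentrate on (M2).

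\emph{Verification of (M1).} By construction each $\mathcal{R}_{n}$ is obtained from the partitions $\mathcal{R}'_{\bullet}$ (which are built from the common horizontal and vertical foliations $F_{+}(\omega), F_{-}(\omega)$) by pulling back through $w_{n}^{-1}$. For indices with $\varepsilon = 1$ we set $\mathcal{R}_{j} = w_{j}^{-1}(\mathcal{R}'_{i}\vee\mathcal{R}'_{j}\vee\mathcal{R}'_{k})$, and the text already notes that $\{w_{n}^{-1}\mathcal{R}'_{n}\}$ satisfies (M1) because the points $\omega_{n}X_{n}$ are ordered along $\Gamma(\omega)$; I would make this precise by recalling that moving along the Teichm\"uller geodesic in the positive direction refines the horizontal foliation boundaries and coarsens the vertical ones, which is exactly the containment $w_{n}\partial_{h}\mathcal{R}_{n}\subset w_{n+1}\partial_{h}\mathcal{R}_{n+1}$ and $w_{n}\partial_{v}\mathcal{R}_{n}\supset w_{n+1}\partial_{v}\mathcal{R}_{n+1}$. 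For the intermediate indices with $\varepsilon(n) = 0$ the definition $\mathcal{R}_{n} = w_{n}^{-1}w_{m}\mathcal{R}_{m}$ makes $w_{n}\mathcal{R}_{n} = w_{m}\mathcal{R}_{m}$, so the inclusions degenerate to equalities and (M1) is immediate.

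\emph{Verification of (M2).} This is the heart of the lemma. Fix $n$ and consider $w_{n}R_{n}$ and $w_{n+1}R_{n+1}$ for $R_{n}\in\mathcal{R}_{n}$, $R_{n+1}\in\mathcal{R}_{n+1}$; I must show that when they meet, the intersection is a single birectangle. The key point is that, after applying $w_{n}$ and $w_{n+1}$, all the relevant rectangles live on the \emph{same} surface with the \emph{same} pair of singular foliations $\mathcal{F}_{+}(\omega), \mathcal{F}_{-}(\omega)$, with horizontal and vertical boundaries cut out from the families $\tau_{\bullet}$ and $\eta_{\bullet}$. Because $\tau$ consists of leaves of $\mathcal{F}_{+}$ and $\eta$ consists of leaves of $\mathcal{F}_{-}$, the horizontal boundary of any of these rectangles runs along $\mathcal{F}_{+}$-leaves and the vertical boundary along $\mathcal{F}_{-}$-leaves. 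A birectangle is determined by its horizontal and vertical boundary arcs, and the intersection of two birectangles whose horizontal (resp. vertical) sides lie along the same foliation $\mathcal{F}_{+}$ (resp. $\mathcal{F}_{-}$) is again a region bounded by $\mathcal{F}_{+}$-leaves horizontally and $\mathcal{F}_{-}$-leaves vertically. I would argue that (M1) guarantees one rectangle's horizontal boundary refines the other's while its vertical boundary is coarser, so the intersection cannot be disconnected: a connected piece bounded on its horizontal sides by two parallel $\mathcal{F}_{+}$-arcs and on its vertical sides by two parallel $\mathcal{F}_{-}$-arcs is exactly one birectangle. Unique ergodicity of $\mathcal{F}_{-}(\omega)$ (used already in the birectangle-partition lemma to rule out boundary loops) ensures no degenerate components with empty $\tau$-boundary appear.

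\emph{Main obstacle.} I expect the subtle step to be ruling out that the intersection $w_{n}R_{n}\cap w_{n+1}R_{n+1}$ breaks into several birectangles rather than one — i.e., genuinely upgrading ``a union of birectangles'' to ``a single birectangle.'' This is precisely where the compatibility (M1) between consecutive levels must be exploited: one must check that the cutting curves $\partial_{h}$ and $\partial_{v}$ of the two levels are nested (one horizontal family contained in the other, one vertical family containing the other) rather than transverse in a crossing pattern, since only nested families produce a single-rectangle intersection. The care taken in defining $\mathcal{R}_{j}$ as the common refinement $w_{j}^{-1}(\mathcal{R}'_{i}\vee\mathcal{R}'_{j}\vee\mathcal{R}'_{k})$ over the relevant ``good'' indices, together with the definition of $\varepsilon$, is exactly what forces this nesting, and I would trace through that definition to confirm that no extra horizontal or vertical cuts are introduced between level $n$ and level $n+1$ beyond those already present at one of the two levels. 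Once nesting is established, (M2) follows from the elementary geometry of rectangles adapted to a singular flat structure.
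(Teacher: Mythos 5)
Your verification of (M1) is fine and matches the construction, but your (M2) argument has a genuine gap at exactly the point you flag as the ``main obstacle.'' The claim that nestedness of the cut systems --- one partition's horizontal boundary refining the other's while its vertical boundary is coarser --- by itself forces $w_{m}R_{m}\cap w_{n}R_{n}$ to be connected is false. This is precisely the situation of a classical Markov partition for a pseudo-Anosov map: $\varphi(R_{i})$ and $R_{j}$ satisfy exactly these boundary inclusions, yet $\varphi(R_{i})\cap R_{j}$ typically consists of several parallel strips, because a long thin birectangle can wrap around the surface and cross another one many times, each crossing being a birectangle bounded by arcs of the nested families. The paper itself concedes this: condition (M1) only yields that a nonempty intersection is a \emph{family} of birectangles, and upgrading ``family'' to ``single'' requires an input that your proposal never supplies.

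The missing input is the interplay with the coarser partitions $\mathcal{R}'_{m}$ and $\mathcal{R}'_{n}$. The paper first reduces to consecutive good indices $m<n$ with $\varepsilon(m)=\varepsilon(n)=1$ and $\varepsilon(l)=0$ for $m<l<n$ (when $\varepsilon(n+1)=0$ one has $w_{n+1}\mathcal{R}_{n+1}=w_{n}\mathcal{R}_{n}$ and (M2) is immediate, as you note). It then observes that every birectangle of $w_{m}\mathcal{R}_{m}$ or of $w_{n}\mathcal{R}_{n}$ lies inside a \emph{single component} $R'$ of $w_{m}R'_{m}\cap w_{n}R'_{n}$ for some $R'_{m}\in\mathcal{R}'_{m}$, $R'_{n}\in\mathcal{R}'_{n}$; there may well be several such components (this is where the multiple-crossing phenomenon lives), but each fine rectangle occupies only one of them. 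Inside a fixed $R'$, the level-$m$ rectangles are obtained by cutting $R'$ \emph{only vertically} (the additional cuts come from $w_{i}\eta_{i}$, $i$ the previous good index, while the horizontal cuts $w_{n}\tau_{n}$ do not meet the interior of $w_{n}R'_{n}\supset R'$), and the level-$n$ rectangles by cutting $R'$ \emph{only horizontally} (additional cuts $w_{k}\tau_{k}$, $k$ the next good index). A full-height vertical strip of $R'$ meets a full-width horizontal strip of $R'$ in exactly one birectangle, and (M2) follows. Note in particular that your proposed check --- that ``no extra horizontal or vertical cuts are introduced between level $n$ and level $n+1$ beyond those already present at one of the two levels'' --- is not what the construction provides: each level genuinely introduces cuts the other lacks ($w_{i}\eta_{i}$ versus $w_{k}\tau_{k}$); what saves the argument is that, within each coarse component, these new cuts run in complementary directions.
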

\begin{proof}
If $\varepsilon(n) = 0$, the condition (M2) is apparently satisfied.
Hence it suffices to prove for $m$ and $n$ with 
\begin{itemize}
\item $\varepsilon(m) = \varepsilon(n) = 1$ and
\item  $\varepsilon(l) = 0$ for all $m<l<n$, 
\end{itemize}
that for each $R_{m}\in\mathcal{R}_{m}$ and $R_{n}\in\mathcal{R}_{n}$, the intersection $w_{m}R_{m}\cap w_{n}R_{n}$ is either empty or a single birectangle.
Since $\{\mathcal{R}_{n}\}$ satisfies (M1), we see that if the intersection $w_{m}R_{m}\cap w_{n}R_{n}\not=\emptyset$, it is a family of birectangles.
Note that each birectangle in 
$w_{m}\mathcal{R}_{m}$ or $w_{n}\mathcal{R}_{n}$  
 is a subset of a component of $R'_{m}\cap R'_{n}$ for some $R'_{m}\in\mathcal{R}'_{m}$ and $R'_{n}\in\mathcal{R}'_{n}$.
 From each component $R'$ of the intersection $R'_{{m}}\cap R'_{n}$, 
a birectangle $w_{m}R_{m}\in w_{m}\mathcal{R}_{m}$ (resp. $w_{n}R_{n}\in \mathcal{R}_{n}$) is obtained by decomposing $R'$ vertically by leaves of $F_{-}$
 (resp. horizontally by leaves of $F_{+}$).
Hence each $w_{m}R_{m}\cap w_{n}R_{n}$ is connected.
Thus (M2) follows.
\end{proof}

\subsection{Symbolic dynamics}
We now associate a random subshift of finite type to the representative $\bold{w}$ of $\omega$
 by using the semi-Markov partition $\{\mathcal{R}_{n}\}$ constructed in \S \ref{subsec.Markov}.
Let $k(n,\bold{w})$ denote the number of birectangles in $\mathcal{R}_{n}$.
We label birectangles in $\mathcal{R}_{n}$ by $R^n_{1}, R^n_{2}, \dots, R^n_{k(n,\bold{w})}$.
We define $k(n,\bold{w})\times k(n+1,\bold{w})$ matrices $A(\bold{w},n) = (a^{n}_{i,j})$ by setting 
$a^{n}_{i,j} = 1$ if $w_{n}(\mathrm{int}(R^{n}_{i}))\cap w_{n+1}(\mathrm{int}(R^{n+1}_{j})) \not= \emptyset$ and
$a^{n}_{i,j} = 0$ for otherwise.
Let $(\{\Sigma_{A}(\bold{w},n)\}_{n\in\mathbb{Z}},\sigma)$ be the random subshift of finite type with respect to $\{A(\bold{w},n)\}_{n\in\mathbb{Z}}$.
Then each element in $\Sigma_{A}(\bold{w},n)$ corresponds to a point in $S$.
\begin{lem}\label{lem.defi-p}
For any $n$ and $\bold{b} = (b_{i})\in\Sigma_{A}(\bold{w},n)$,
$$\bigcap_{i=-\infty}^{\infty} w_{i+n}(\mathrm{int}(R^{i+n}_{b_{i}}))$$
 determines a single point in $S$.
\end{lem}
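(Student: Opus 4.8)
The plan is to exhibit the bi-infinite intersection as the (necessarily unique) transverse crossing of one leaf of $\mathcal{F}_{+}(\omega)$ with one leaf of $\mathcal{F}_{-}(\omega)$, produced respectively by the forward and the backward tails of $\bold{b}$. Throughout I use that, by construction, $w_{j}\mathcal{R}_{j}$ is for every $j$ a birectangle partition of $S$ whose cells are bounded by leaves of the \emph{fixed} foliations $F_{+}(\omega)$ and $F_{-}(\omega)$ (only the structures $\mathcal{X}_{j}$, hence the lengths of the arcs $\tau_{j},\eta_{j}$, vary with $j$); in particular, when $\varepsilon(n)=0$ one has $w_{n}\mathcal{R}_{n}=w_{m}\mathcal{R}_{m}$, so the whole family $\{w_{j}\mathcal{R}_{j}\}_{j}$ consists of $(F_{+}(\omega),F_{-}(\omega))$-birectangle partitions.

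First I would check that each finite intersection $B_{s,t}:=\bigcap_{i=s}^{t}w_{i+n}(R^{i+n}_{b_{i}})$ is a single closed birectangle with nonempty interior. Since $\bold{b}\in\Sigma_{A}(\bold{w},n)$, for every $i$ the interiors $w_{i+n}(\mathrm{int}(R^{i+n}_{b_{i}}))$ and $w_{i+n+1}(\mathrm{int}(R^{i+n+1}_{b_{i+1}}))$ meet, and by the semi-Markov property (M2) (Lemma \ref{lem.Markov}) each consecutive overlap $w_{i+n}(R^{i+n}_{b_{i}})\cap w_{i+n+1}(R^{i+n+1}_{b_{i+1}})$ is a single birectangle. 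Feeding these overlaps into the monotone nesting of (M1) — horizontal sides grow with the index, vertical sides shrink — the product structure of birectangles lets me chain them: the horizontal sides of $B_{s,t}$ are cut out at the top index $t$ and the vertical sides at the bottom index $s$, so $B_{s,t}$ is again a single birectangle, and by monotonicity of the interior $\mathrm{int}(B_{s,t})\subset\bigcap_{i=s}^{t}w_{i+n}(\mathrm{int}(R^{i+n}_{b_{i}}))$ is nonempty. As the closed sets $B_{-N,N}$ are nested and compact, the finite intersection property gives $\bigcap_{i}w_{i+n}(R^{i+n}_{b_{i}})\neq\emptyset$.

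Next I would show that the cells degenerate in complementary directions. By (M1) the horizontal boundary $w_{i+n}\partial_{h}\mathcal{R}_{i+n}$ increases as $i+n\to+\infty$, so the forward intersections are ever thinner horizontal strips, while the vertical boundary increases as $i+n\to-\infty$, so the backward intersections are ever thinner vertical strips. The quantitative point is that the $\mu_{+}(\omega,\cdot)$-height of $B_{s,t}$ tends to $0$ as $t\to+\infty$ and its $\mu_{-}(\omega,\cdot)$-width tends to $0$ as $s\to-\infty$. For this I would invoke that $\omega_{i+n}X_{i+n}$ moves along $\Gamma(\omega)$ with parameter comparable to $L\,(i+n)$, by Theorem \ref{thm.Tio} together with Lemma \ref{lem.Tio}; hence the Teichm\"uller maps $T_{i+n}$ stretch $F_{+}(\omega)$ and contract $F_{-}(\omega)$ by factors tending to $\infty$. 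As $\tau_{i+n}$ and $\eta_{i+n}$ are normalized to transverse measure $1$ in the structure $\mathcal{X}_{i+n}$, their lengths measured on the fixed surface $S$ tend to $0$, and since every $X_{i+n}$ lies in the $\delta$-thick part the shapes of the cells stay controlled; thus the relevant spacings, and with them the heights and widths above, go to $0$.

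Finally I would assemble the two degenerations. The forward intersection $\bigcap_{i\geq 0}w_{i+n}(R^{i+n}_{b_{i}})$ has height $0$, hence lies in a single leaf of $\mathcal{F}_{+}(\omega)$, and the backward intersection $\bigcap_{i\leq 0}w_{i+n}(R^{i+n}_{b_{i}})$ has width $0$, hence lies in a single leaf of $\mathcal{F}_{-}(\omega)$; because $F_{+}(\omega)$ and $F_{-}(\omega)$ are transverse, filling, uniquely ergodic foliations, such a horizontal and such a vertical leaf meet in at most one point. Together with the nonemptiness from the finite intersection property this shows the total intersection is exactly one point, which lies in each interior $w_{i+n}(\mathrm{int}(R^{i+n}_{b_{i}}))$ as it is the transverse crossing of interior leaves. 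I expect the main obstacle to be precisely the quantitative shrinking of the third paragraph: turning the monotone nesting of (M1) into an honest decay of heights and widths requires the size estimates for the semi-Markov partition (driven by the unbounded Teichm\"uller stretching and the thick-part hypothesis), rather than monotonicity alone; the bookkeeping distinguishing the intersection of interiors from the intersection of closures is a secondary point settled by the same transversality.
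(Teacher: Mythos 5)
Your overall skeleton is the same as the paper's: finite intersections are single birectangles by (M1)/(M2), the cells degenerate horizontally as the index tends to $-\infty$ and vertically as it tends to $+\infty$, and transversality of $F_{+}(\omega)$ and $F_{-}(\omega)$ then pins down a single point. The backward/horizontal half of your quantitative step is also essentially the paper's: the arcs of $\tau_{m}$ are normalized to transverse measure $1$ at time $m$, so Lemma \ref{lem.Tio} and Theorem \ref{thm.Tio} make their measure at time $n$ decay like $1/\exp(L(n-m))$.

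The genuine gap is in the forward/vertical direction. You assert that ``$\tau_{i+n}$ \emph{and} $\eta_{i+n}$ are normalized to transverse measure $1$'' --- but only the arcs of $\tau'$ are normalized ($\mu_{-}$-measure $1$); the arcs of $\eta$ are by construction \emph{shortest subarcs of singular leaves of $\mathcal{F}_{-}$ that meet every non-saddle arc of $\tau'$}, i.e.\ their lengths are hitting-time quantities with no a priori bound. Your fallback, that the thick-part hypothesis keeps ``the shapes of the cells controlled,'' does not close this: $\delta$-thickness bounds diameters of the flat surfaces $\mathcal{X}_{m}$, not the return time of a vertical leaf to a prescribed union of horizontal arcs, and while the $X_{m}$ project to a compact part of moduli space, the pairs (surface, foliation data) entering the construction do not range over a compact family. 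A uniform upper bound $V$ on vertical lengths is exactly the content of Lemma \ref{lem.U}, which is nontrivial (it needs the careful choice of the neighborhoods $U_{\pm}$ and a limiting argument using density of infinite singular leaves of $F_{-}(\omega)$) and is only proved in \S\ref{sec.refine}, \emph{after} this lemma; it is then used for the quantitative estimate in Lemma \ref{lem.c(n)}, not here. The paper's proof of the present lemma deliberately avoids any such bound: for $m\to+\infty$ it argues qualitatively that the arcs $\tau_{m}$ become ever longer subarcs of singular leaves of $F_{+}(\omega)$, and since every infinite singular leaf is dense (unique ergodicity), the complementary rectangles are forced to be vertically thin. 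So as written your third paragraph would fail; it can be repaired either by substituting the paper's density argument in the vertical direction, or by first establishing the uniform bound of Lemma \ref{lem.U} --- but not by the measure-normalization-plus-thickness reasoning you give.
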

\begin{proof}
Let us fix $\bold{b} = (b_{i})\in\Sigma_{A}(\bold{w},n)$.
By the properties (M1) and (M2) of semi-Markov partitions, we have that for each $m$, $C_{m}:=\bigcap_{i=-m}^{m} w_{i+n}(\mathrm{int}(R^{i+n}_{b_{i}}))$ is a birectangle with exactly one component.
We consider the singular Euclidean metric that determines the point $\omega_{n}X_{n}$ on $\Gamma(\omega)$.
Let $\Gamma$ denote $\Gamma(\omega)$ with parametrization by arc length so that $\Gamma(0)=\omega_{n}X_{n}$.
We will prove that the diameter of $C_{m}$ converges to $0$ as $m\rightarrow-\infty$.
By Lemma \ref{lem.Tio}, we see that points $\omega_{m}X_{m}$ for negative $m$ are close to $\Gamma(L(n-m))$.
To construct $\{\mathcal{R}_{m}\}$, we considered arcs on 
$\mathcal{F}_{+}(\omega,m)$ of $\mu_{+}(\omega,m)$ measure 1
which is $\mu_{+}(\omega,n)$ measure almost equal to $1/\exp(L(n-m))$ by Lemma \ref{lem.Tio} and Theorem \ref{thm.Tio}.
Hence the horizontal diameter of $C_{m}$ converges to $0$ as $m\rightarrow -\infty$.
On the other hand, for $m$ positive the arcs $\tau(F_{+}(\omega),F_{-}(\omega),\mathcal{X}_{m})$ travel on singular leaves of $F_{+}$ longer as $m$ increases.
Since each infinite singular leaf is dense, it follows that the vertical diameter converges to $0$ as $m\rightarrow\infty$.
Thus we have a point on $\mathcal{X}_{n}$.
Finally by the marking $f\circ w_{n}:S\rightarrow \mathcal{X}_{n}$, we fixed above, we have a point on $S$.
\end{proof}
By Lemma \ref{lem.defi-p}, we define $p(\bold{w},n):\Sigma_{A}(\bold{w},n)\rightarrow S$.
\begin{lem}[c.f.{\cite[\S 10.4]{FLP}}]\label{lem.p-surj}
The map $p(\bold{w},n)$ is continuous and surjective.
\end{lem}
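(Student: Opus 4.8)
The plan is to treat continuity and surjectivity separately, in both cases reusing the diameter estimates established in the proof of Lemma \ref{lem.defi-p}.

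For continuity, I would first note that $\Sigma_A(\bold{w},n)$ is compact, being a closed subset of the product $\prod_{i}\{1,\dots,k(\bold{w},i+n)\}$ of finite discrete spaces, which is compact by Tychonoff's theorem. For $\bold{b}=(b_i)$ write $C_m(\bold{b}) := \bigcap_{i=-m}^{m} w_{i+n}(\mathrm{int}(R^{i+n}_{b_i}))$ as in Lemma \ref{lem.defi-p}. That proof shows the horizontal diameter of $C_m(\bold{b})$ is controlled by the constraints coming from the negative indices and tends to $0$, while the vertical diameter is controlled by the positive indices and likewise tends to $0$. Since at each level there are only finitely many admissible blocks $(b_{-m},\dots,b_m)$, the maximum of $\mathrm{diam}(C_m(\bold{b}))$ over all $\bold{b}$ is attained and tends to $0$ as $m\to\infty$. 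Thus, given $\epsilon>0$, I can choose $m$ so that every $C_m(\bold{b})$ has diameter less than $\epsilon$; if $\bold{b}$ and $\bold{b}'$ agree on the block $-m\le i\le m$, then $p(\bold{w},n)(\bold{b})$ and $p(\bold{w},n)(\bold{b}')$ both lie in $\overline{C_m(\bold{b})}$, hence are within $\epsilon$. Since agreement on a finite central block is a basic open condition in the product topology, this gives (uniform) continuity.

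For surjectivity, since $\Sigma_A(\bold{w},n)$ is compact and $p(\bold{w},n)$ is continuous, the image is compact and therefore closed in $S$, so it suffices to show the image is dense. Let $U := S\setminus\bigcup_{i\in\mathbb{Z}} w_{i+n}(\partial\mathcal{R}_{i+n})$ be the set of points avoiding every boundary. Each $\partial\mathcal{R}_{i+n}$ is a finite union of arcs, hence has zero area with respect to the singular Euclidean measure, so $U$ is the complement of a countable union of null sets; as every nonempty open set has positive area, $U$ is dense. For any $x\in U$ and each $i$ there is a unique index $b_i$ with $x\in w_{i+n}(\mathrm{int}(R^{i+n}_{b_i}))$; because consecutive interiors both contain $x$, their intersection is nonempty, so $a^{i+n}_{b_i,b_{i+1}}=1$ and $\bold{b}=(b_i)\in\Sigma_A(\bold{w},n)$. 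By construction $x\in\bigcap_i w_{i+n}(\mathrm{int}(R^{i+n}_{b_i}))$, which by Lemma \ref{lem.defi-p} is the single point $p(\bold{w},n)(\bold{b})$; hence $x=p(\bold{w},n)(\bold{b})$ and $U\subseteq\mathrm{Im}(p(\bold{w},n))$. Taking closures yields $S=\overline{U}\subseteq\mathrm{Im}(p(\bold{w},n))$, so $p(\bold{w},n)$ is surjective.

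The main obstacle I anticipate is surjectivity at boundary points, which the density argument is designed to circumvent: rather than coding points on the boundary leaves $w_{i+n}(\partial\mathcal{R}_{i+n})$ directly, where the symbol $b_i$ is not uniquely determined and admissibility is delicate, I code only the generic points of $U$ and recover the boundary points via closedness of the image. A secondary technical point is checking that the diameter estimates of Lemma \ref{lem.defi-p} are uniform over admissible blocks at each level; this follows from the finiteness of the partitions $\mathcal{R}_{i+n}$ together with the monotone shrinking of the $\mu_+$- and $\mu_-$-widths of the defining arcs.
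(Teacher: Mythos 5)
Your proposal is correct and takes essentially the same approach as the paper: continuity via the shrinking-diameter estimates for cylinder images from Lemma \ref{lem.defi-p}, and surjectivity via density of the set $U$ of points avoiding all the boundaries $w_{i+n}(\partial\mathcal{R}_{i+n})$ together with compactness of $\Sigma_{A}(\bold{w},n)$, which makes the image closed. The only (immaterial) difference is that you establish density of $U$ by a null-set/positive-area argument, whereas the paper gets it from the Baire category theorem applied to the open dense sets $V_{j}$.
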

\begin{proof}
Since the image of a long cylinder set of $\Sigma_{A}(\bold{w},n)$ is contained in a small birectangle,
$p(\bold{w},n)$ is continuous.
Let $V_{j} = \bigcup_{i=1}^{k(j,\bold{w})} w_{j}(\mathrm{int}(R_{i}^{j}))$.
For each $j$, $V_{j}$ is an open dense set.
Then by the Baire category theorem, 
$U := \bigcap_{j\in\mathbb{Z}}V_{j}$
is dense.
Each $x\in U$ is contained in $w_{j+n}(\mathrm{int}(R^{j+n}_{b_{j}}))$ for some $b_{j}$ for every $j\in\mathbb{Z}$.
Let $\bold{b} = \{b_{j}\}_{j\in\mathbb{Z}}\in\Sigma_{A}(\bold{w},n)$.
We have $p(\bold{w},n)(\bold{b}) = x$, which implies $U\subset p(\bold{w},n)(\Sigma_{A}(\bold{w},n))$.
Since $U$ is dense and $\Sigma_{A}(\bold{w},n)$ is compact, $p(\bold{w},n)$ is surjective.
\end{proof}

We are now ready to prove Theorem \ref{thm.symbolic}.
\begin{proof}[{Proof of Theorem \ref{thm.symbolic}}]
Note that $T_{n}:\mathcal{X}_{0}\rightarrow\mathcal{X}_{n}$ changes only the metric and does not change the image.
Since $p(\bold{w},n):\Sigma_{A}(\bold{w},n)\rightarrow S$ is defined by using $f\circ w_{n}:S\rightarrow \mathcal{X}_{n}$,
we have $p(\bold{w},n) \circ \sigma^{n} = w_{n}^{-1}\circ p(\bold{w},0)$.
\end{proof}

We now consider the topological entropy of the shift map $\sigma:\Sigma_{A}(\bold{w},n)\rightarrow\Sigma_{A}(\bold{w},n+1)$.
In order to prove Theorem \ref{thm.main}, it suffices to prove that growth ratio of the number of elements of cylinder sets.
\begin{lem}\label{lem.p-entropy}
Let $(\{\Sigma_{A}(\bold{w},n)\}_{n\in\mathbb{Z}},\sigma)$ be the random subshift of finite type defined above.
Then for any $K\in\mathbb{Z}$,
$$ \limsup_{m\rightarrow\infty}\frac{\log N(\mathcal{C}_{n}(K,K+m))}{m} \leq L,$$
where $L$ is the drift of $\omega$ with respect to the Teichm\"uller metric.
\end{lem}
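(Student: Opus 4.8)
The plan is to bound the number $N(\mathcal{C}_{n}(K,K+m))$ of $(K,K+m)_n$-cylinder sets by counting how birectangles subdivide under the dynamics, and to relate this count to the area growth governed by the Teichmüller geodesic $\Gamma(\omega)$. A $(K,K+m)_n$-cylinder set records, for an admissible symbol sequence, the prescribed labels $b_i$ for $K\le i\le K+m$; by the admissibility condition $A(\bold{w},i+n)_{b_i,b_{i+1}}=1$, each nonempty cylinder corresponds to a nonempty intersection $\bigcap_{i=K}^{K+m} w_{i+n}(\mathrm{int}(R^{i+n}_{b_i}))$, which by the semi-Markov property (M1)-(M2) is a single birectangle. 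Hence $N(\mathcal{C}_{n}(K,K+m))$ equals the number of distinct birectangles obtained by overlaying the partitions $w_{K+n}\mathcal{R}_{K+n},\dots,w_{K+m+n}\mathcal{R}_{K+m+n}$, i.e.\ the number of components of the complement of the union $\bigcup_{i=K}^{K+m} w_{i+n}(\partial R^{i+n})$.

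The key step is to estimate this component count using the singular Euclidean metric on $\mathcal{X}_n$ that realizes $\omega_n X_n$ on $\Gamma(\omega)$. As in the construction behind Lemma \ref{lem.defi-p}, the horizontal arcs $\tau_{i+n}$ contract and the vertical arcs $\eta_{i+n}$ stretch by factors controlled by $\exp(Ld_{\mathcal{T}})$ along the geodesic. The overlay of $m+1$ consecutive partitions creates a grid whose number of cells is, up to the bounded combinatorial complexity of each individual partition (bounded in terms of the genus via the number of singular leaves), comparable to the total horizontal extent times the total vertical extent measured in the metric at the ``middle'' index. Since consecutive points $\omega_i X_i$ advance along $\Gamma(\omega)$ by Teichmüller distance approximately $L$ per step (by Lemma \ref{lem.Tio} and Theorem \ref{thm.Tio}), the finest arc over the window has $\mu$-measure on the order of $\exp(-Lm)$, and the total number of resulting birectangles grows at most like $\exp(Lm)$ up to a subexponential factor.

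Concretely, I would fix the quadratic differential metric at index $K+n$ (or any fixed reference index in the window), use the bound $d_{\mathcal{T}}(\omega_{i}X_i,\omega_{i+1}X_{i+1})\approx L$ to control the stretch/contract factors $T_i$, and show
$$
N(\mathcal{C}_{n}(K,K+m)) \le C(m)\exp(Lm),
$$
where $C(m)$ is subexponential (polynomial in $m$, absorbing the uniformly bounded number of singular leaves and saddle connections). Taking logarithms, dividing by $m$, and letting $m\to\infty$ kills the $\frac{1}{m}\log C(m)$ term and yields $\limsup_{m\to\infty}\frac{1}{m}\log N(\mathcal{C}_{n}(K,K+m))\le L$. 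The independence of $K$ follows because $\sigma(\mathcal{C}_n(s,t))=\mathcal{C}_{n+1}(s-1,t-1)$ and the shift preserves cardinalities of cylinder families, so the window's position is immaterial.

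\textbf{The main obstacle} I anticipate is making the area/grid estimate uniform along $\omega$: the individual partitions $\mathcal{R}_i$ can have varying combinatorial complexity and the ``bad'' indices (where $\varepsilon(i)=0$) contribute degenerate steps. The crucial leverage is Lemma \ref{lem.Tio}, which guarantees the backtracking error $d_{\mathcal{T}}(\omega_n X_0,\omega_n X_n)$ is sublinear, so the accumulated distortion between the actual metric $\mathcal{X}_i$ and the idealized geodesic position $\Gamma(Li)$ is subexponential in $m$. I would feed this sublinearity into the stretch-factor bounds so that the deviations from the clean $\exp(\pm L)$ scaling get absorbed into the subexponential prefactor $C(m)$, leaving the sharp exponential rate $L$.
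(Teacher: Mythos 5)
Your proposal is correct and follows essentially the same route as the paper: identify the nonempty $(K,K+m)_{n}$-cylinders with the birectangles of the common refinement $\mathcal{R}_{K}\vee\cdots\vee\mathcal{R}_{K+m}$ via $p(\bold{w},n)$, then count by how often the stretched arcs $\tau_{K+m}$ cut the rectangles of $\mathcal{R}_{K}$, with the exponential rate $L$ and subexponential corrections supplied by Lemma \ref{lem.Tio} and Theorem \ref{thm.Tio}. The paper just makes your grid heuristic precise as the one-directional cut count $N\leq D\cdot L_{m}/c_{K}$, where $D$ bounds the number of singular leaves, $c_{K}$ is the minimal horizontal width in $\mathcal{R}_{K}$, and $L_{m}$ is the maximal $\mu_{-}$-length of arcs in $\tau_{K+m}$ with $\frac{1}{m}\log L_{m}\rightarrow L$.
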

\begin{proof}
Note that by the property (M1) and (M2) of semi-Markov partitions, the intersections 
$\mathcal{R}_{K}\vee\cdots\vee\mathcal{R}_{K+m}$ is also a birectangle partition.
For a given birectangle partition $\mathcal{R}$, let $N(\mathcal{R})$ denotes the number of birectangles.
By the map $p(\bold{w},n)$, we see that 
$$N(\mathcal{C}_{n}(K,K+m)) = N(\mathcal{R}_{K}\vee\cdots\vee\mathcal{R}_{K+m}).$$
Hence we will give a bound of $N(\mathcal{R}_{K}\vee\cdots\vee\mathcal{R}_{K+m})$.
Let $c_{K}$ be the shortest horizontal length of birectangles in $\mathcal{R}_{K}$ measured by $\mu_{-}(\omega,n)$.
Let $L_{m}$ denote the maximum of $\mu_{-}(\omega,n)$ measures of the arcs $\tau_{K+m}:=\tau(F_{+}(\omega),F_{-}(\omega),\mathcal{X}_{K+m})$.
Each arc in  $\tau_{K+m}$ cuts birectangles in $\mathcal{R}_{K}$ at most ${L_{m}}/c_{K}$ times.
The number of singular leaves of $F_{+}(\bold{w},n)$ is bounded from above by some
constant $D$ which depends only on $S$.
Hence  $N(\mathcal{R}_{K}\vee\cdots\vee\mathcal{R}_{K+m})$ is at most
$D\cdot {L_{m}}/c_{K}$.
By Lemma \ref{lem.Tio} and Theorem \ref{thm.Tio},
$$\lim_{m\rightarrow\infty} \frac{1}{m}\log L_{m} = L,$$
which implies
$$\limsup_{m\rightarrow\infty}\log (N(\mathcal{R}_{K}\vee\cdots\vee\mathcal{R}_{K+m})) \leq L.$$
\end{proof}

\section{Proof of the main theorem}\label{sec.main}
In \S \ref{sec.Markov}, we have constructed a semi-Markov partition $\{\mathcal{R}_{n}\}$ for any representative of $\mathbb{P}$-a.e. $\omega\in\mathrm{MCG}(S)^{\mathbb{Z}}$.
In this section, we will prove that for $\mathbb{P}$-a.e. $\omega\in\mathrm{MCG}(S)^{\mathbb{Z}}$, we can find a representative $\bold{w} = (w_{i})$ of $\omega$
such that $h(\bold w) \leq L$.
In the case of pseudo-Anosov diffeomorphisms, facts of type Lemma \ref{lem.p-surj} and \ref{lem.p-entropy} suffice to prove that the topological entropy and 
the translation distance on the Teichm\"uller space agree.
This is because to construct a Markov partition for a pseudo-Anosov diffeomorphism, we only need to use a single point in $\mathcal{T}(S)$.
On the other hand,
for random walks, we need to use different $X_{n}$'s for each $n$.
One of the main difficulty caused for above reason is that the Lebesgue number of a given open covering $\mathcal{A}$ varies depending on the metric.
In \S \ref{sec.Lebesgue}, we first give a suitable asymptotic bound for the Lebesgue number.
Every argument so far works for any $U_{\pm}$ satisfying \ref{condi.U}.
In \S \ref{sec.positive-nu}-\ref{sec.refine}, the neighborhoods $U_{\pm}$ of $\mathcal{F}_{\pm}(\omega)$ 
which we need to prove Theorem \ref{thm.main} are given.
\subsection{Bound for the Lebesgue number}\label{sec.Lebesgue}
To have a bound of the Lebesgue number, we first observe how singular Euclid structures may change in the $\delta$-thick part of $\mathcal{T}(S)$.
\begin{lem}\label{lem.bilipschiz}
There exists $B = B(S,\delta)$ such that the following holds.
Let $X$ be in the $\delta$-thick part of $\mathcal{T}(S)$, and  $q_{1}, q_{2}\in Q_{1}(X)$.
Then the singular Euclidean metric associated to $q_{1}$ and $q_{2}$ are $B$-bi-Lipschitz.
\end{lem}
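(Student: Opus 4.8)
The plan is to deduce the uniform bound from compactness of the space of normalized quadratic differentials over the thick part of moduli space, after reducing the global estimate to a purely local comparison near the zeros. Note at the outset that one cannot hope to take the comparison map to be the identity: since $|q_{1}/q_{2}|^{1/2}$ is an unbounded meromorphic function whenever $q_{1}\neq q_{2}$, the identity is never bi-Lipschitz between distinct unit-norm differentials, so the asserted map must be allowed to be a genuine homeomorphism that redistributes the geometry at the singularities.

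First I would pass to moduli space. By Mumford's compactness criterion the image $\mathcal{M}_{\delta}(S)$ of the $\delta$-thick part in moduli space is compact, and the associated bundle $\mathcal{Q}_{1}\mathcal{M}_{\delta}$ of unit-norm holomorphic quadratic differentials is compact as well, its fibre over $X$ being $Q_{1}(X)$, a sphere in the finite-dimensional space $Q(X)$. Forming the fibre product $Z := \mathcal{Q}_{1}\mathcal{M}_{\delta}\times_{\mathcal{M}_{\delta}}\mathcal{Q}_{1}\mathcal{M}_{\delta}$ gives a compact space parametrizing triples $(X,q_{1},q_{2})$ with $q_{1},q_{2}\in Q_{1}(X)$, up to the mapping class group. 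On $Z$ I would define $B(X,q_{1},q_{2})$ to be the infimal bi-Lipschitz constant of a homeomorphism from $(S,|q_{1}|^{1/2})$ to $(S,|q_{2}|^{1/2})$; this is $\mathrm{MCG}(S)$-invariant, so it descends to a function on $Z$, and the lemma is exactly the assertion that $B$ is bounded on $Z$. Since a locally bounded function on a compact space is bounded (cover by finitely many neighborhoods on which it is bounded), it suffices to prove that $B$ is finite and locally bounded.

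For finiteness and local boundedness I would compare each flat metric to the nearby ones. Observe first that by Gauss-Bonnet the zeros of any $q\in Q_{1}(X)$ have orders summing to $4g(S)-4$, so every cone angle lies in the fixed range $[2\pi,(4g(S)-2)\pi]$ and the number of singular points is at most $4g(S)-4$; this caps the local complexity uniformly. The key local claim is that $q\mapsto |q|^{1/2}$ is locally uniformly bi-Lipschitz: if $q\to q^{0}$ in $\mathcal{Q}_{1}\mathcal{M}_{\delta}$, then away from the zeros of $q^{0}$ the differentials converge uniformly with $q^{0}\neq 0$, so the identity is $(1+o(1))$-bi-Lipschitz there; and near each zero of $q^{0}$ the nearby zeros of $q$ cluster in a small disk, where I would build by hand a homeomorphism of a fixed disk redistributing the bounded cone angle, with bi-Lipschitz constant controlled by the angles alone. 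Patching these gives, for a neighborhood $U$ of $q^{0}$, a constant $C_{0}$ with $(S,|q|^{1/2})$ being $C_{0}$-bi-Lipschitz to $(S,|q^{0}|^{1/2})$ for every $q\in U$. Composing such comparisons for $q_{1}$ and $q_{2}$ separately yields $B(X,q_{1},q_{2})\le C_{0}^{2}\,B(X_{0},q_{1}^{0},q_{2}^{0})$ on a neighborhood, which is precisely local boundedness, and applied at the point itself gives finiteness.

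The main obstacle is this local comparison near the singularities, exactly at the boundary strata where several zeros of $q$ collide into a single higher-order zero of the limit $q^{0}$ (or where a multiple zero splits). There the comparison map is genuinely not the identity, so I cannot avoid constructing an explicit angle-redistributing homeomorphism on the degenerating cluster and checking that its bi-Lipschitz constant stays uniformly bounded as the cluster collapses. The uniform bound on cone angles coming from Gauss-Bonnet is what makes this control possible; once it is in place, compactness of $Z$ delivers the constant $B=B(S,\delta)$.
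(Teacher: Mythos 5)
Your proposal is correct and rests on the same underlying mechanism as the paper --- compactness of the unit-norm quadratic differentials over the $\delta$-thick part of moduli space --- but it distributes the work quite differently. The paper's proof is three lines: it cites \cite[Lemma 9.22]{FLP} for the fact that any two singular Euclidean metrics are bi-Lipschitz with \emph{some} constant, uses Theorem \ref{thm.HM} (compactness of $Q_{1}(X)\cong\mathcal{PMF}(S)$) to extract a fiberwise constant $B(X)$, and then asserts without proof that $B(X)$ varies continuously over the compact thick part of moduli space. You instead work on the compact fibre product $Z$ and prove local boundedness of the optimal bi-Lipschitz constant directly, splitting the comparison into the region away from the zeros, where near-identity maps are $(1+o(1))$-bi-Lipschitz, and small disks around clusters of colliding zeros, where an angle-redistributing homeomorphism with constant controlled by the Gauss--Bonnet bound on cone angles is built by hand. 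Your route buys precision exactly where the paper is soft: pointwise finiteness of the pairwise constant does not, by compactness alone, yield a fiberwise bound, nor continuity of $B(X)$, so both of the paper's steps tacitly require your local-boundedness claim at the strata where zeros collide; and your preliminary remark that the identity map is never bi-Lipschitz between distinct unit-norm differentials correctly explains why the comparison homeomorphisms must move points, something the paper never makes explicit. Two caveats. First, your ``by hand'' disk construction is the genuine crux and must handle multi-scale degenerations (sub-clusters collapsing at different rates inside a collapsing cluster), for which a single redistribution on one disk does not suffice; an induction on the hierarchy of clusters is needed, and it terminates with uniform constants because the number of zeros, hence the depth of the hierarchy, is at most $4g(S)-4$. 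Second, finiteness of $B$ at the base point itself, which you dispatch with ``applied at the point itself,'' does not follow from the convergence statement and should be run through the same two-chart construction for the fixed pair $(q_{1}^{0},q_{2}^{0})$ --- this is precisely the content of the \cite[Lemma 9.22]{FLP} citation in the paper. With those details supplied, your argument is complete and in fact more self-contained than the paper's.
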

\begin{proof}
As pointed out in \cite[Lemma 9.22]{FLP}, any two singular Euclidean metrics are bi-Lipschitz with some bi-Lipschitz constant.
Since by Theorem \ref{thm.HM}, $Q_{1}(X)$ is homeomorphic to $\mathcal{PMF}(S)$, a compact space,
we see that for any $X\in\mathcal{T}(S)$,
 there exists $B(X)>0$ such that any two singular Euclidean metrics corresponding to elements in $Q_{1}(X)$ are $B(X)$-bi-Lipschitz.
Since $B(X)$ varies continuously and the $\delta$-thick part of the moduli space of $S$ is compact, we have a desired bound.
\end{proof}

Recall that the semi-Markov partition $\{\mathcal{R}_{n}\}$ is defined on $X_{n}\in\mathcal{T}(S)$ with 
representative $f\circ w_{n}:S\rightarrow\mathcal{X}_{n}$.
Then there is a quadratic differential $q_{n}\in Q(\mathcal{X}_{0})$ that is the initial quadratic differential of the 
Teichm\"uller geodesic connecting $X_{0}$ and $X_{n}$.
Let $\mathcal{X}'_{n}$ denote the complex structure we get by stretching (resp. contracting) horizontal (resp. vertical) foliation of $q_{n}$ so that it gives the same point as $X_{n}$ in $\mathcal{T}(S)$.
Let $T_{n}'$ be the corresponding Teichm\"uller map.
Since two markings $f\circ w_{n}:S\rightarrow \mathcal{X}_{n}$ and $f:S\rightarrow\mathcal{X}_{n}'$
gives the same point in the Teichm\"uller space, there is a biholomorphic map 
$\phi:\mathcal{X}_{n}'\rightarrow\mathcal{X}_{n}$ so that $\phi\circ f\circ w_{n}$ is homotopic to $f$.
Hence by homotopy, we may suppose $w_{n} = f^{-1}\circ \phi^{-1}\circ f$.
From now on, we use these representations and let $\bold{w} = (w_{n})$.
We now fix an open covering $\mathcal{A}$ of $S$.
Let $q'_{n}$ be the quadratic differential on $S$ determined by $\mathcal{X}_{n}'$ and $\Gamma(\theta^{n}\omega)$.
For a quadratic differential $q$, 
we denote by $\delta(q)$ the Lebesgue number of $\mathcal{A}$ with respect to the singular Euclidean metric defined by $q$.
By the choice of the representative $\bold{w}=(w_{n})$, 
$\delta(q'_{n})$ is equal to the Lebesgue number of $w_{n}\mathcal{A}$ with respect to the quadratic differential determined by $\Gamma(\omega)$ and $\mathcal{X}_{n}$ 
that we used to construct $\mathcal{R}(F_{+}(\omega),F_{-}(\omega),\mathcal{X}_{n})$.
\begin{lem}\label{lem.Lebesgue}
For $\mathbb{P}$-a.e. $\omega$, the $\{q'_{n}\}$ defined above satisfies
$$\lim_{n\rightarrow\infty} \frac{-\log\delta(q'_{n})}{n} = 0.$$
\end{lem}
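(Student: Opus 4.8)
The plan is to control the Lebesgue number $\delta(q'_n)$ by relating it to the bi-Lipschitz geometry of the singular Euclidean metrics and to the distance $d_{\mathcal{T}}(X_0, X_n)$, which we already know grows at most linearly. The key point is that $\delta(q'_n)$ measures the Lebesgue number of a \emph{fixed} open cover $\mathcal{A}$ with respect to a \emph{varying} metric, so I need to understand how that metric distorts distances as $n\to\infty$. Since all the relevant complex structures lie in the $\delta$-thick part (the points $X_n$ are in the $\delta$-thick part by construction), Lemma~\ref{lem.bilipschiz} provides a uniform bi-Lipschitz constant $B = B(S,\delta)$ comparing any two unit-norm singular Euclidean metrics at the same point. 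This reduces the problem to comparing the metric $q'_n$ on $\mathcal{X}'_n$ with some fixed reference metric, modulo the Teichm\"uller stretching encoded by $T'_n$.

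\textbf{First} I would unwind what $q'_n$ is: it is the quadratic differential at $\mathcal{X}'_n$ obtained from $q_n\in Q(\mathcal{X}_0)$ by the Teichm\"uller map $T'_n$, which stretches the horizontal foliation by $e^{d_{\mathcal{T}}(X_0,X_n)}$ and contracts the vertical by the same factor. Thus the $q'_n$-metric differs from a \emph{unit-norm} metric at $\mathcal{X}_0$ by two operations: first a Teichm\"uller deformation of ratio $e^{d_{\mathcal{T}}(X_0,X_n)}$, and second a comparison between the (possibly different) quadratic differentials at $\mathcal{X}_0$, which by Lemma~\ref{lem.bilipschiz} costs only the fixed factor $B$. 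The crucial observation is that a Teichm\"uller stretch of ratio $\lambda=e^{d_{\mathcal{T}}(X_0,X_n)}$ is globally $\lambda$-bi-Lipschitz: it expands horizontal lengths by $\lambda$ and contracts vertical lengths by $\lambda^{-1}$, so distances are distorted by a factor between $\lambda^{-1}$ and $\lambda$. Consequently the $q'_n$-metric and a fixed reference metric $q_0$ are $(B\cdot e^{d_{\mathcal{T}}(X_0,X_n)})$-bi-Lipschitz.

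\textbf{Next}, since the Lebesgue number scales linearly under bi-Lipschitz maps, I would conclude
$$\delta(q'_n) \geq \frac{1}{B\,e^{d_{\mathcal{T}}(X_0,X_n)}}\,\delta(q_0),$$
where $\delta(q_0)>0$ is the Lebesgue number of $\mathcal{A}$ for the fixed reference metric. Taking logarithms gives
$$-\log\delta(q'_n) \leq \log B + d_{\mathcal{T}}(X_0,X_n) - \log\delta(q_0).$$
Dividing by $n$, the constant terms vanish in the limit, so everything comes down to showing $\tfrac{1}{n}d_{\mathcal{T}}(X_0,X_n)\to 0$. This is where the earlier work pays off: by the triangle inequality $d_{\mathcal{T}}(X_0,X_n)\le d_{\mathcal{T}}(X_0,\Gamma(Ln))+d_{\mathcal{T}}(\Gamma(Ln),\omega_n X_n)$, and since $\omega_n$ acts by isometries on $\mathcal{T}(S)$ one relates $d_{\mathcal{T}}(X_0,X_n)$ to $d_{\mathcal{T}}(\omega_n X_0,\omega_n X_n)$, which is $o(n)$ by Lemma~\ref{lem.Tio}, together with Theorem~\ref{thm.Tio} controlling $d_{\mathcal{T}}(\omega_n X_0,\Gamma(Ln))$. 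Thus $d_{\mathcal{T}}(X_0,X_n)$ is sublinear in $n$, yielding $\tfrac{1}{n}(-\log\delta(q'_n))\to 0$.

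\textbf{The hard part} will be bookkeeping the precise relationship between $X_n$ and $X_0$: the points $X_n$ were defined on the shifted geodesics $\Gamma(\theta^n\omega)=\omega_n^{-1}\Gamma(\omega)$, so $d_{\mathcal{T}}(X_0,X_n)$ is most naturally estimated after applying the isometry $\omega_n$, converting it into $d_{\mathcal{T}}(\omega_n X_0,\omega_n X_n)$, which is exactly the quantity Lemma~\ref{lem.Tio} controls. I must be careful that $q'_n$ is built at $\mathcal{X}'_n$ (the ``straightened'' representative with marking $f$) rather than at $\mathcal{X}_n$, but since $\mathcal{X}'_n$ and $\mathcal{X}_n$ give the same point of $\mathcal{T}(S)$ and differ only by the biholomorphism $\phi$ absorbed into the choice of $w_n$, the Teichm\"uller distance entering the bi-Lipschitz bound is genuinely $d_{\mathcal{T}}(X_0,X_n)$. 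Verifying that the uniform constant $B$ from Lemma~\ref{lem.bilipschiz} indeed applies — i.e. that both quadratic differentials being compared sit over $\delta$-thick points — is the one place requiring care, but it follows from the construction guaranteeing all $X_n$ lie in the $\delta$-thick part.
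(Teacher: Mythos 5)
Your proposal is correct and takes essentially the same route as the paper: bound $1/\delta(q'_{n})$ by a uniform bi-Lipschitz constant from Lemma~\ref{lem.bilipschiz} times the Teichm\"uller stretch factor $\exp(d_{\mathcal{T}}(X_{0},X_{n}))$, then conclude by the sublinearity of $d_{\mathcal{T}}(X_{0},X_{n}) = d_{\mathcal{T}}(\omega_{n}X_{0},\omega_{n}X_{n})$ from Lemma~\ref{lem.Tio}. The only discrepancy is bookkeeping: the paper applies Lemma~\ref{lem.bilipschiz} twice --- once at $X_{0}$ comparing $q'_{0}$ with $q_{n}$, and once at $X_{n}$ comparing $T'_{n}(q_{n})$ with $q'_{n}$, since these are genuinely different differentials at each endpoint --- yielding $B^{2}$ rather than your single $B$, which is immaterial after dividing by $n$.
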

\begin{proof}
We first note that if two singular Euclidean metrics determined by $q$ and $q'$ are $B$-bi-Lipschitz, then we have $\delta(q)/\delta(q')\leq B$.
Since we have chosen $X_{n}$ so that they are in the $\delta$-thick part, we have $\delta(q'_{0})/\delta(q_{n})\leq B$ 
and $\delta(T'_{n}(q_{n}))/\delta(q'_{n})\leq B$ by Lemma \ref{lem.bilipschiz}.
By the definition of $T'_{n}$,
the ratio $\delta(q_{n})/\delta(T'_{n}(q_{n}))$ is bounded from above by $\exp(d_{\mathcal{T}}(X_{0},X_{n}))$.
Hence $$1/\delta(q'_{n})\leq B^{2}\exp(d_{\mathcal{T}}(X_{0},X_{n}))/\delta(q'_{0}).$$
Therefore, by Lemma \ref{lem.Tio}, we have the conclusion.
\end{proof}

\subsection{$\nu$-measures of neighborhoods of $F_{+}(\omega)$.}\label{sec.positive-nu}
The goal of this subsection is the following proposition.
The measure $\nu$ is from Theorem \ref{thm.KM}.
\begin{prop}\label{prop.positive-nu}
For $\mathbb{P}$-a.e. $\omega$, any open neighborhood $U$ of $F_{+}(\omega)$ has positive $\nu$-measure.
\end{prop}
We first recall the curve graphs and shadows.
The {\em curve graph} of $S$, denoted $\mathcal{C}(S)$, is the graph whose set of vertices are the set of isotopy classes of essential simple closed curves,
and two vertices are connected by an edge of length $1$ if corresponding simple closed curves can be represented disjointly.
For $x,y,z\in\mathcal{C}(S)$, the {\em Gromov product} of $y$ and $z$ with respect to $x$, denoted $(y\cdot z)_{x}$ is defined by

 $$(y\cdot z)_{x} := \frac{1}{2}(d(x,y)+d(x,z) - d(y,z)).$$
Since $\mathcal{C}(S)$ is Gromov hyperbolic by \cite{MM},
it has the Gromov boundary $\partial \mathcal{C}(S)$.
Let $\bar{\mathcal{C}}(S) := \mathcal{C}(S)\cup \partial\mathcal{C}(S)$.
A sequence of points $\{x_{i}\in\bar{\mathcal{C}}(S)\}$ converges to a point $\lambda\in\partial\mathcal{C}(S)$ if 
$(x_{i}\cdot\lambda)_{x}\rightarrow\infty$.
We define a {\em shadow set} by $$S_{x}(y,R):= \{z\in\bar{\mathcal{C}}(S)\mid (y\cdot z)_{x}\geq R\}.$$
\begin{proof}[Proof of Proposition \ref{prop.positive-nu}]
By \cite[Theorem 1.2 and 1.4]{Kla} and the fact that $F_{+}(\omega)$ is uniquely ergodic,
 we see that $S_{x}(F_{+}(\omega),R)\subset U$ as subsets of $\mathcal{PMF}(S)$
 for sufficiently large $R$.

Since $\omega_{n}x$ converges to $F_{+}(\omega)$  in $\bar{\mathcal{C}}(S)$, 
we see that for any $D$, there exists $N\in\mathbb{Z}_{+}$ such that for any $n>N$,
 $\omega_{n}x\in S_{x}(F_{+}(\omega), R+D)$.
Hence by the work of Maher \cite[Proposition 2.13 (5)]{Mah2012},
for any $\epsilon>0$,  
there is $\omega_{n}$ such that
$\nu_{\omega_{n}}(S_x(F_{+}(\omega),R))\geq 1-\epsilon$ where
$\nu_{\omega_{n}}(A):= \nu(\omega_{n}^{-1}A)$.
Note that in \cite{Mah2012}, the measure $\mu$ is assumed to have a finite support, however, the finiteness is not used for the proof of results we need here.
Hence 
$$\nu(S_{x}(F_{+}(\omega),R))\geq \mu_{n}(\omega_{n})\nu_{\omega_{n}}(S_x(F_{+}(\omega),R))\geq
\mu_{n}(\omega_{n})(1-\epsilon)>0,$$
where $\mu_{n}$ is the $n$-fold convolution of $\mu$.
Since $U\supset S_{x}(F_{+}(\omega),R)$, we have $\nu(U)>0$.
\end{proof}

\subsection{Refinement by cylinders}\label{sec.refine}
We finally give neighborhoods $U_{+}$ and $U_{-}$ of $F_{+}(\omega)$ and $F_{-}(\omega)$ respectively.
We would like to find $U_{\pm}$ so that the vertical lengths of birectangles in $\{\mathcal{R}_{n}\}$ are bounded from above.
Recall that given two $G_{{\pm}}\in\mathcal{UE}(S)\subset\mathcal{PMF}(S)$, we can construct a birectangle decomposition $\mathcal{R}({G_{+},G_{-},\mathcal{X}_{G}})$, where $f:S\rightarrow\mathcal{X}_{G}$ is a representative of a closest point projection $X_{G}$ on $\Gamma(G_{+},G_{-})$ of $X_{0}$.
Since the vertical length is independent of representatives of points of the Teichm\"uller space, we denote birectangle partitions by $\mathcal{R}(G_{+},G_{-},X_{G})$.
We abuse notations similarly for $\tau$ and $\eta$.
Let $V(\mathcal{R}(G_{+},G_{-},X_{G}))$ denote the maximum of the vertical lengths of birectangles in $\mathcal{R}({G_{+},G_{-},X_{G}})$.
Since each $\mathcal{R}_{n}$ is obtained from some $\mathcal{R}(G_{+},G_{-},X_{G})$ by decomposing each birectangle in $\mathcal{R}(G_{+},G_{-},X_{G})$, it suffices to prove the following.
\begin{lem}\label{lem.U}
There exist $V>0$ and open neighborhoods $U_{+}$ and $U_{-}$ of $F_{+}(\omega)$ and $F_{-}(\omega)$ respectively so that the following holds.
Suppose that 
\begin{itemize}
\item[($\ast$)]$G_{\pm}\in U_{\pm}$ are written as $G_{\pm} = \eta F_{\pm}(\omega)$ 
respectively for some $\eta\in\mathrm{MCG}(S)$. 
\end{itemize}
Then $V(\mathcal{R}(G_{+}, G_{-}, X_{G}))<V$.
\end{lem}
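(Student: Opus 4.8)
The plan is to find neighborhoods $U_\pm$ on which the birectangle partition $\mathcal{R}(G_+,G_-,X_G)$ has uniformly bounded vertical lengths. The key observation is that the vertical length of a birectangle is controlled by the $\mu_-$-measures of the arcs in $\eta(G_+,G_-,X_G)$, which in turn depend on how the singular Euclidean structure at $X_G$ sits relative to the pair $(G_+,G_-)$. Since $X_G$ is the closest-point projection of the fixed basepoint $X_0$ onto $\Gamma(G_+,G_-)$, and we have arranged (Condition \ref{condi.U}) that $D(U_+,U_-)$ is bounded by $C$, the point $X_G$ stays in a compact neighborhood of $X_0$ inside the $\delta$-thick part. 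Compactness of the thick part is what should ultimately give a uniform bound.

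\emph{First} I would argue that $V(\mathcal{R}(G_+,G_-,X_G))$ varies continuously as a function of $(G_+,G_-)\in U_+\times U_-$. The construction of $\eta$ proceeds by taking, along each singular leaf of $G_-$, the shortest subarc meeting every non-saddle-connection element of $\tau$; since the arcs in $\tau$ have $\mu_-$-measure $1$ by construction, the vertical extent of the resulting birectangles is governed by the transverse $\mu_-$-spacing of the singular leaves of $G_+$, a quantity that depends continuously on the Hubbard--Masur quadratic differential by Theorem \ref{thm.HM}. \emph{The hypothesis ($\ast$)} that $G_\pm=\eta F_\pm(\omega)$ for a common $\eta\in\mathrm{MCG}(S)$ is essential here: it guarantees that the combinatorial type of the partition (the number of singular leaves, the saddle-connection pattern) is constant over the relevant $G_\pm$, so that $V$ is not merely upper-semicontinuous but genuinely controlled, with no jumps caused by degenerations of the foliation structure.

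\emph{Next}, I would shrink $U_\pm$ to small enough neighborhoods of $F_\pm(\omega)$ so that every $(G_+,G_-)$ satisfying ($\ast$) yields $X_G$ within the $C$-neighborhood of $X_0$, hence within the $\delta$-thick part; then $V$ attains a finite supremum $V$ over this set by the compactness of the $\delta$-thick part of moduli space together with the continuity established above, and by applying Lemma \ref{lem.bilipschiz} to pass between the competing singular Euclidean metrics with a uniform bi-Lipschitz constant $B=B(S,\delta)$. The bound $V$ can then be taken to depend only on $S$, $\delta$, and the chosen neighborhoods. It is automatic that $U_\pm$ can simultaneously be required to have positive $\nu$ (resp.\ $\check\nu$) measure, by Proposition \ref{prop.positive-nu}, so that Condition \ref{condi.U} remains in force.

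\emph{The main obstacle} I anticipate is the role of condition ($\ast$): a priori, as $G_\pm$ ranges over a full neighborhood of $F_\pm(\omega)$, the partition $\mathcal{R}(G_+,G_-,X_G)$ could have wildly varying combinatorics (singular leaves can become saddle connections in a limit, leaf counts can change, arcs in $\tau$ can be forced to become whole saddle connections), and vertical lengths need not stay bounded. The restriction to $G_\pm$ of the form $\eta F_\pm(\omega)$ circumvents exactly this: it forces the partition to be a homeomorphic image of the fixed partition $\mathcal{R}(F_+(\omega),F_-(\omega),X_0)$ under $\eta$, up to the metric data, so that only the metric — and not the topological type — varies. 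Making precise that this discrete combinatorial rigidity, combined with the metric continuity and thick-part compactness, yields a genuine uniform bound is the delicate point; the rest is a compactness-plus-continuity packaging.
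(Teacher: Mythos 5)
Your compactness-plus-continuity packaging cannot close the proof, and the gap sits exactly where the lemma's content lies. First, the parameter set you take a supremum over is not compact: the pairs satisfying ($\ast$) form a countable subset of $U_+\times U_-$ accumulating at $(F_+(\omega),F_-(\omega))$ (and possibly elsewhere), and shrinking $U_\pm$ does not change this. A continuous function on such a set can be unbounded; boundedness of $V(\mathcal{R}(G_+,G_-,X_G))$ as $(G_+,G_-)$ approaches the accumulation point is precisely what the lemma asserts, so invoking ``continuity plus compactness'' there is circular. Second, the combinatorial-rigidity claim you use to justify continuity is false. By equivariance, $\mathcal{R}(\eta F_+(\omega),\eta F_-(\omega),X_G)=\eta\,\mathcal{R}(F_+(\omega),F_-(\omega),\eta^{-1}X_G)$, and $\eta^{-1}X_G$ is the closest point on $\Gamma(\omega)$ to $\eta^{-1}X_0$ --- a point that moves arbitrarily far along $\Gamma(\omega)$ as $\eta$ ranges over $\mathrm{MCG}(S)$. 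Since the arc families $\tau$ and $\eta(\cdot)$ are normalized to have transverse measure $1$ with respect to the unit-norm quadratic differential at the basepoint, partitions based at different points of $\Gamma(\omega)$ have genuinely different combinatorics (the number of rectangles grows without bound, arcs can degenerate to saddle connections), so ($\ast$) does not fix the topological type of the partition, only its $\mathrm{MCG}(S)$-orbit relative to a moving basepoint. Finally, Lemma \ref{lem.bilipschiz} and thick-part compactness only compare unit-norm flat metrics at a single point; they say nothing about the vertical extent of the particular rectangles, which can be tall and thin while the total area stays equal to $1$.

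The missing ingredient is the dynamical input the paper actually uses: every infinite singular leaf of the uniquely ergodic foliation $F_-(\omega)$ is dense in $S$. The paper argues by contradiction: if there were $G^n_\pm=\eta_n F_\pm(\omega)\to F_\pm(\omega)$ with $V_n:=V(\mathcal{R}(G^n_+,G^n_-,X_{G^n}))\to\infty$, then the unit-area normalization forces the horizontal width of the tallest rectangle $R_n$ to tend to $0$; passing to a subsequence, the vertical sides $\partial_v R_n$ converge in the Hausdorff topology to an infinite subarc of a singular leaf of $F_-(\omega)$ that meets $\tau_0$ only twice --- impossible, since a dense infinite singular leaf must cross $\tau_0$ infinitely often. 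Nothing in your proposal rules out arbitrarily tall, thin rectangles of bounded area, so without some such recurrence or minimality argument the uniform bound $V$ is not established. If you want to salvage your outline, the step to repair is your ``First'' paragraph: replace the continuity claim by a sequential compactness argument on the rectangles themselves (Hausdorff limits of their vertical boundaries) and derive the contradiction from density of leaves, which is in effect the paper's proof.
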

\begin{proof}
Suppose the contrary.
Then there are $V_{n}$ with $V_{n}\rightarrow\infty$ and 
$\eta_{n}\in\mathrm{MCG}(S)$ with $G_{\pm}^{n}:=\eta_{n}F_{\pm}(\omega)\rightarrow F_{\pm}(\omega)$ such that
$V(\mathcal{R}(G^{n}_{+}, G^{n}_{-},X_{G^{n}})) = V_{n}$.
Let $R_{n}\in\mathcal{R}(G^{n}_{+}, G^{n}_{-},X_{G^{n}})$ be the rectangle with vertical length $V_{n}$.
Note that since we assume that the total area is equal to $1$, the horizontal length of $R_{n}$ converges to $0$.
Hence by taking a subsequence if necessary, 
the vertical boundary $\partial_{v}R_{n}$ converges in the Hausdorff topology to 
an infinite subarc of a singular leaf of $F_{-}(\omega)$, which intersects $\tau_{0}$ only twice.
However any infinite singular leaf of $F_{-}(\omega)$ is dense, so such a singular leaf never exists.
\end{proof}

Note that since $U_{+}$ and $U_{-}$ are open,
we see that $\nu(U_{+})>0$ and $\check\nu(U_{-})>0$ by Proposition \ref{prop.positive-nu}.
By taking smaller open neighborhoods if necessary, we may suppose that $U_{\pm}$ in Lemma \ref{lem.U} satisfy Condition \ref{condi.U}. 
From now on, we consider the semi-Markov partition $\{\mathcal{R}_{n}\}$ constructed with such $U_{\pm}$
and corresponding representation $\bold{w}$ of $\omega$ whose construction is given in \S \ref{sec.Lebesgue}.
We now consider images of cylinders in $\Sigma_{A}(\bold{w},n)$ by $p(\bold{w},n)$.
For notational simplicity, we call the image of cylinders by $p(\bold{w},n)$ cylinders and omit to write $p(\bold{w},n)$.
Let $c(n)$ be the number so that the set of cylinders $\mathcal{C}_{n}(-c(n),c(n))$ refines $\mathcal{A}$.
The existence of $c(n)$ follows from Lemma \ref{lem.defi-p}.
We now prove that $c(n)$ grows sublinearly.
\begin{lem}\label{lem.c(n)}
For the above $c(n)$, we have
$$\lim_{n\rightarrow\infty} \frac{c(n)}{n} = 0.$$
\end{lem}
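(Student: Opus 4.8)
The plan is to show that $c(n)$ is controlled by two competing scales: the Lebesgue number $\delta(q'_n)$ of the fixed covering $\mathcal{A}$ measured in the metric of $\mathcal{X}_n$, and the vertical/horizontal dimensions of the birectangles in $\mathcal{R}_n$. The covering $\mathcal{C}_n(-c,c)$ refines $\mathcal{A}$ as soon as every $(-c,c)_n$-cylinder has diameter smaller than $\delta(q'_n)$, so I need to estimate how large $c$ must be to force the birectangle $C_c := \bigcap_{i=-c}^{c} w_{i+n}(\mathrm{int}(R^{i+n}_{b_i}))$ down to diameter $\delta(q'_n)$. Working in the singular Euclidean metric of $\mathcal{X}_n$ (up to the uniform bi-Lipschitz constant $B$ of Lemma \ref{lem.bilipschiz}, since $X_n$ lies in the $\delta$-thick part), the horizontal extent of $C_c$ shrinks geometrically as we intersect with later partitions $\mathcal{R}_{n+1},\dots,\mathcal{R}_{n+c}$, while the vertical extent shrinks as we intersect with $\mathcal{R}_{n-1},\dots,\mathcal{R}_{n-c}$.

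First I would quantify these two shrinking rates exactly as in the proof of Lemma \ref{lem.defi-p}. By Lemma \ref{lem.Tio} and Theorem \ref{thm.Tio}, the point $\omega_{n+j}X_{n+j}$ sits near $\Gamma(L(n+j))$, so the arcs of $\tau_{n+j}$ have $\mu_+(\omega,n)$-measure comparable to $\exp(Lj)$; dually, the arcs of $\eta_{n-j}$ give vertical cuts whose $\mu_-(\omega,n)$-spacing is comparable to $\exp(-Lj)$. Concretely, after intersecting over the window $[-c,c]$ the horizontal length of $C_c$ is at most $C' e^{-L c}$ and the vertical length at most $C' e^{-Lc}$ for a constant $C'$ absorbing the sublinear errors from Lemma \ref{lem.Tio} and the uniform bound $V$ on vertical lengths from Lemma \ref{lem.U}. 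Hence $\mathrm{diam}(C_c) \le B\,C'' e^{-Lc}$ in the metric $q'_n$, uniformly over cylinders.

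It then suffices to take $c(n)$ to be the least integer with $B\,C'' e^{-Lc(n)} \le \delta(q'_n)$, which gives
$$
c(n) \le \frac{1}{L}\bigl(-\log\delta(q'_n) + \log(B C'')\bigr) + 1.
$$
Dividing by $n$ and letting $n\to\infty$, the term $\log(BC'')/ (Ln)$ vanishes, and by Lemma \ref{lem.Lebesgue} we have $-\log\delta(q'_n)/n \to 0$. Therefore $c(n)/n \to 0$, as claimed. The analogous estimate for $n\to-\infty$ follows symmetrically, using the versions of Lemmas \ref{lem.Tio} and \ref{lem.Lebesgue} stated for negative $n$.

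The main obstacle I expect is the first step: making the geometric decay of $\mathrm{diam}(C_c)$ genuinely uniform over all cylinders and exponential in $c$ with a rate governed by $L$. The subtlety is that the scales $\exp(Lj)$ are only asymptotically correct up to the sublinear drift errors of Lemma \ref{lem.Tio}, and the horizontal decay depends on the density of singular leaves of $F_+(\omega)$ (used qualitatively in Lemma \ref{lem.defi-p}) being promoted to a quantitative, uniform rate. The boundedness of vertical lengths from Lemma \ref{lem.U} is precisely what makes this promotion possible, so the key is to combine that uniform upper bound $V$ with the measure estimates to extract a clean exponential-in-$c$ bound rather than merely a convergence-to-zero statement.
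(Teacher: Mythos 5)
Your proposal follows essentially the same route as the paper: bound the horizontal and vertical $q'_n$-lengths of $(-c,c)_n$-cylinders by exponentially decaying quantities (using the measure-one normalization of the $\tau$ arcs on one side and the uniform bound $V$ of Lemma \ref{lem.U} on the other), compare with the Lebesgue number via Lemma \ref{lem.Lebesgue}, and solve for $c(n)$. Two remarks, one cosmetic and one substantive. Cosmetically, you have the horizontal/vertical roles swapped relative to the paper (in the paper the horizontal length is controlled by the \emph{past} window, via $1/\exp(d_{\mathcal{T}}(w_nX_n,w_{n-c(n)}X_{n-c(n)}))$, and the vertical length by the \emph{future} window, via $V/\exp(d_{\mathcal{T}}(w_nX_n,w_{n+c(n)}X_{n+c(n)}))$); since the window $[-c,c]$ is symmetric this does not affect the asymptotics.

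The substantive point is your claim that $\mathrm{diam}(C_c)\le B\,C''e^{-Lc}$ with a constant $C''$ ``absorbing the sublinear errors from Lemma \ref{lem.Tio}.'' That step is false as written: the deviations supplied by Lemma \ref{lem.Tio} and Theorem \ref{thm.Tio} are $o(n)$, i.e.\ of size up to $(n+m)\epsilon$ at step $m=n\pm c$, so they grow with $n$ and cannot be absorbed into an $n$-independent constant; consequently your displayed bound $c(n)\le \frac{1}{L}\bigl(-\log\delta(q'_n)+\log(BC'')\bigr)+1$ is not justified. You even identify this as the expected obstacle in your final paragraph, but then wave it away rather than resolve it. The paper's resolution is the standard $\epsilon$-device: fix $\epsilon>0$, note that for all large $n$ one has $d_{\mathcal{T}}(w_nX_n,w_mX_m)\ge (m-n)L-(n+m)\epsilon$ and $\delta(q'_n)\ge e^{-n\epsilon}$, and deduce that the smallest integer $c^{\epsilon}(n)>(\log(\sqrt{2}V)+3n\epsilon)/(L-\epsilon)$ works, so $c(n)\le c^{\epsilon}(n)$ for large $n$; since $\epsilon$ is arbitrary, $c(n)/n\to 0$. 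In other words, the correct form of your bound carries an extra $o(n)/L$ term rather than a constant, and the conclusion survives. With that repair (and with the diameter threshold taken as $\delta(q'_n)/\sqrt{2}$ in each of the two directions, as in the paper), your argument coincides with the paper's proof.
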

\begin{proof}

It suffices to find $c(n)$ so that the horizontal and the vertical lengths with respect to $q'_{n}$
of $(-c(n),c(n))_{n}$-cylinders are less than $\delta(q'_{n})/\sqrt{2}$.
Let $V>0$ be the upper bound given by Lemma \ref{lem.U}.
Then the horizontal and vertical length of any $(-c(n),c(n))_{n}$-cylinder is bounded from above by
$1/\exp(d_{\mathcal{T}}(w_{n}X_{n},w_{n-c(n)}X_{n-c(n)}))$ 
and $V/\exp(d_{\mathcal{T}}(w_{n}X_{n},w_{n+c(n)}X_{n+c(n)}))$ respectively.
Since the bound for the horizontal length is given similarly, we only discuss the vertical lengths.
Let $m:=n+c(n)$ and let $\epsilon>0$ be arbitrary.
By Lemma \ref{lem.Tio}  and Theorem \ref{thm.Tio}, for sufficiently large $n$, we have 
$d_{\mathcal{T}}(w_{n}X_{n},w_{m}X_{m})\geq (m-n)L-(n+m)\epsilon$.
We also have $\delta(q'_{n})\geq1/ \exp (n\epsilon)$ by Lemma \ref{lem.Lebesgue}.
Let $c^{\epsilon}(n)$ be the smallest integer with $c^{\epsilon}(n) > (\log(\sqrt{2}V) + 3n\epsilon)/(L-\epsilon)$.
Then for large enough $n$, $\mathcal{C}_{n}(-c^{\epsilon}(n), c^{\epsilon}(n))$ refines $\mathcal{A}$.
Hence for large enough $n$, $c(n)\leq c^{\epsilon}(n)$.
Since $\epsilon>0$ is arbitrary,  we have $\lim_{n\rightarrow\infty}c(n)/n=0$.
\end{proof}

We are now ready to prove the main theorem.
\begin{proof}[{Proof of Theorem \ref{thm.main}}]
Since $\lim_{n\rightarrow\infty}c(n)/n = 0$, there exists $K>0$ such that
$-K<-c(n)+n$ for all $n\in\mathbb{N}$.
Therefore we have,
\begin{align*}
&\frac{1}{n+1}\log N(\mathcal{A}\vee w_{1}\mathcal{A}\vee \cdots\vee w_{n}\mathcal{A})\\
&\leq \frac{1}{n+1}\log N(\mathcal{C}_{0}(-c(0),c(0))\vee \cdots \vee w_{n}\mathcal{C}_{n}(-c(n),c(n))&\text{(by definition of $c(n)$)}\\
&= \frac{1}{n+1}\log N(\mathcal{C}_{0}(-c(0), c(0)) \vee \cdots\vee\sigma^{-n}\mathcal{C}_{0}(-c(n)+n, n+c(n)))&\text{(Theorem \ref{thm.symbolic})}\\
&\leq \frac{1}{n+1}\log N(\mathcal{C}_{0}(-K,n+c(n)))\\
&= \frac{\log N(\mathcal{C}(-K,n+c(n)))}{n+c(n)+K}\cdot(\frac{n+c(n)+K}{n+1}).
\end{align*} 
By Lemma \ref{lem.p-entropy}, we have
$$ \limsup_{n\rightarrow\infty}\frac{\log N(\mathcal{C}(-K,n+c(n)))}{n+c(n)+K} \leq L.$$
Also by Lemma \ref{lem.c(n)},
$$\lim_{n\rightarrow\infty}(\frac{n+c(n)+K}{n+1}) = 1.$$
Therefore,
$$\limsup_{n\rightarrow\infty}\frac{\log N(\mathcal{C}(-K,n+c(n)))}{n+c(n)+K}\cdot(\frac{n+c(n)+K}{n+1}) \leq L.$$
By putting all estimates together, we have
\begin{align*}
&\limsup_{n\rightarrow\infty}\frac{1}{n}\log N(\mathcal{A}\vee w_{1}\mathcal{A}\vee \cdots\vee w_{n-1}\mathcal{A})
\leq L.
\end{align*}
Hence the representative $\bold{w}$ satisfy that for arbitrary open covering $\mathcal{A}$,  
$h(\bold{w},\mathcal{A})\leq L$.
Putting together with Theorem \ref{thm.Kar} and Lemma \ref{lem.lemma1}, we have $h(\omega) = L$.
\end{proof}

\section*{Acknowledgement}
This work was partially supported by JSPS Research Fellowship for Young Scientists.

%



\begin{thebibliography}{99}
\bibitem[AKM]{AKM} R.~L.~Adler, A.~G.~Konheim, M.~H.~McAndrew.
{\it Topological entropy}.
 Transactions of the American Mathematical Society 114 (2): 309-319.
 \bibitem[Ber]{Ber}
 L. Bers, {\it An extremal problem for quasiconformal mappings and a theorem by Thurston}, 
 Acta. Math., 141 (1978), 73-98.
\bibitem[CR]{CR}
Y. Choi and K. Rafi.
{\it Comparison between Teichm\"uller and Lipschitz metrics.}
J. Lond. Math. Soc. (2), 76(3):739-756, 2007. math.GT/0510136.

\bibitem[DH]{DH}
F. Dahmani and C. Horbez,
{\it Spectral theorems for random walks on mapping class groups and $\mathrm{Out}(F_N)$}, arXiv:1506.06790.

\bibitem[FLP]{FLP} A. Fathi, F. Laudenbach, V. Po\'enaru et al.,
 Travaux de Thurston sur les surfaces, Ast\'erisque 66-67, Soci\'et\'e Math\'ematique de France, 1979.
\bibitem[FM]{FM}
B. Farb and D. Margalit, {\it A Primer on Mapping Class Groups
(PMS-49)}, Princeton University Press, 2011.
\bibitem[GM]{GM}
F. Gardiner and H. Masur,
{\it Extremal length geometry of Teichm\"uller space},
 Complex Variables and Elliptic Equations 16.2-3 (1991): 209-237.
\bibitem[GK]{GK} 
VK. Gundlach and Y. Kifer,
{\it Random hyperbolic systems}
(English summary) Stochastic dynamics (Bremen, 1997), 17-145, Springer, New York, 1999. 
\bibitem[HM]{HM}
J. Hubbard and H. Masur,
{\it Quadratic differentials and foliations},
Acta. Math. 142 (1979), 221-274.
\bibitem[KM]{KM}
V. Kaimanovich and H. Masur, {\it The Poisson boundary of the mapping class group},
 Invent. Math. 125 (1996) 221-264.
\bibitem[Kar]{Kar} A. Karlsson, {\it Two extensions of Thurston's
	     spectral theorem for surface diffeomorphisms},
	     Bull. Lond. Math. Soc. 46 (2014), no. 2, 217-226.
\bibitem[Kla]{Kla}
E. Klarreich, 
{\it The boundary at infinity of the curve complex and the relative Teichm\"uller space},
 unpublished manuscript, Ann Arbor 1999.
%
%
\bibitem[Mah]{Mah2012} J. Maher,
{\it Exponential decay in the mapping class group},
 J. Lond. Math. Soc. (2) 86 (2012), no. 2, 366-386.
\bibitem[MM]{MM}
H. Masur and Y. Minsky, {\it Geometry of the complex of curves. I. Hyperbolicity},
 Invent. Math., 138(1):103-149, 1999.
\bibitem[Thu]{Thu} W. P. Thurston,
{\it On the geometry and dynamics of diffeomorphisms of surfaces}, 
Bulletin of the American mathematical society 19, (1988) 417-431.
\bibitem[Tio]{Tio}
G. Tiozzo, {\it Sublinear deviation between geodesics and sample paths}, Duke Math. J. 164 (2015), no. 3, 511-539.
\end{thebibliography}
\end{document}